\newcommand{\dint}{\displaystyle\int}
\newcommand\redout{\bgroup\markoverwith
	{\textcolor{red}{\rule[0.5ex]{2pt}{0.8pt}}}\ULon}
\theoremstyle{plain}
\newtheorem{theorem}{Theorem}[section]
\newtheorem{corollary}[theorem]{Corollary}
\newtheorem{lemma}[theorem]{Lemma}
\newtheorem{proposition}[theorem]{Proposition}
\theoremstyle{definition}
\newtheorem{definition}[theorem]{Definition}
\theoremstyle{remark}
\newtheorem{remark}[theorem]{Remark}
\numberwithin{equation}{section}
\numberwithin{theorem}{section}
\title{Hitting probabilities for fractional Brownian motion with deterministic drift}
\author{ }
\date{}
\begin{document}
	
	\maketitle
 
 \begin{center}
     
   \author{ MOHAMED ERRAOUI\\
   Department of mathematics, Faculty of science El jadida,\\ Chouaïb Doukkali University,
   Morocco \\
  e-mail\textup{: \texttt{erraoui@uca.ac.ma}}}
\end{center}

\begin{center}
         \author{ YOUSSEF HAKIKI\footnote{Supported by National Center for Scientific and Technological Research (CNRST)}\\
   Department of mathematics, Faculty of science Semlalia,\\ Cadi Ayyad University, 2390 Marrakesh, Morocco \\
  e-mail\textup{: \texttt{youssef.hakiki@ced.uca.ma}}}
 \end{center} 
	\begin{abstract}
Let $B^{H}$ be a $d$-dimensional fractional Brownian motion
with Hurst index $H\in(0,1)$, $f:[0,1]\longrightarrow\mathbb{R}^{d}$
a Borel function, and $E\subset[0,1]$, $F\subset\mathbb{R}^{d}$
are given Borel sets. The focus of this paper is on hitting probabilities of the fractional Brownian motion $B^{H}$ with the deterministic drift $f$. It aims to highlight the role of the regularity properties of the drift $f$ as well as that of the dimension of $E$ in determining the upper and lower bounds of $\mathbb{P}\{(B^H+f)(E)\cap F\neq \emptyset \}$ for $F$ a subset of $\mathbb{R}^{d}$ and also for $F$ a singleton.
\end{abstract}

\textbf{Keywords:} Fractional Brownian motion, Hitting probabilities, Capacity, Hausdorff measure

\vspace{0,2cm}

\textbf{Mathematics Subject Classification:} 62134, 60J45, 60G17, 28A78


	\section{Introduction}

	The hitting probability describes the probability that a given process
	will ever reach some state or set of states $F$. To find upper and lower bounds for the hitting probabilities in terms of the Hausdorff measure and the capacity of the set $F$, is a fundamental problem in probabilistic potential theory. For $d$-dimensional Brownian motion $B$ the
	probability that a path, will ever visit a given set $F\subseteq\mathbb{R}^{d}$,
	is classically estimated using the Newtonian capacity of $F$. Kakutani \cite{Kak44}
	was the first to establish this result linking capacities and hitting
	probabilities for Brownian motion. Precisely, he showed that, for $(d\geq3)$, a compact
	set $F$ is hit with positive probability by $B$ if and only if $F$ has positive Newtonian capacity.
	Since then, considerable efforts have been carried out to establish
	a series of extensions to other processes. This has given rise to
	a large and rapidly growing body of scientific literature on the subject.
	To cite a few examples, we refer to Xiao \cite{Xiao 1996} for developments on hitting probabilities of stationary
	Gaussian random fields and fractional Brownian motion; to Pruitt
	and Taylor \cite{Pruitt Taylor} and Khoshnevisan \cite{Khoshnevisan 1997} for hitting probabilities
	results for general stable processes and Lévy processes; to Khoshnevisan and Shi \cite{Khoshnevisan shi 1999} for hitting probabilities of the Brownian sheet; to Dalang and Nualart
	\cite{Dalang Nualart 2004} for hitting probabilities for the solution of a system of nonlinear
	hyperbolic stochastic partial differential equations;
	to Dalang, Khoshnevisan and Nualart \cite{Dalang Khoshnevisan Nualart 2007} and \cite{Dalang Khoshnevisan Nualart 2009},  for hitting probabilities for the solution of a 
	non-linear stochastic heat equation with additive and multiplicative noise respectively; to Xiao \cite{Xiao 2009}  Biermé,
	Lacaux and Xiao \cite{Bierme Lacaux Xiao 2009} for hitting probabilities of Gaussian random fields.  Finally,
	we refer to Khoshnevisan \cite{Khoshnevisan 2002} for more information on the latter
	as well as on potential theory of random fields.
	
	It should be noted that the above characterization is not common to
	all the processes and this is generally due to the dependence structures
	thereof leading to an upper and lower bounds on hitting probabilities
	in terms of capacity and Hausdorff measure. In this context, Chen and Xiao \cite{Chen Xiao 2012}
	improved the results established by Xiao (Theorem 7.6
	\cite{Xiao 2009}) and by Biermé, Lacaux and Xiao (Theorem 2.1 \cite{Bierme Lacaux Xiao 2009} ) on hitting probabilities of the $\mathbb{R}^{d}$-valued Gaussian
	random field $X$ satisfying conditions $(C_{1})$ and $(C_{2})$, see Xiao \cite{Xiao 2009} for precise definition, through the following

	\begin{equation}
	c^{-1}\mathcal{C}_{\rho_{H},d}(E\times F)\leq\mathbb{P}\{X(E)\cap F\neq\emptyset\}\leq c\mathcal{H}_{\rho_{H}}^{d}(E\times F),\label{Chen Xiao Estimate}
	\end{equation}
	where $E\subseteq[\varepsilon_{0},1]^{N}$, $\varepsilon_{0}\in(0,1)$
	and $F\subseteq\mathbb{R}^{d}$ are Borel sets and $c$ is a finite
	constant which depends on $[\varepsilon_{0},1]^{N}$, $F$ and $H$ only.  We emphasize that, in addition to  fractional Brownian motion, various processes are part of those satisfying conditions $(C_{1})$ and $(C_{2})$ namely, fractional Brownian sheets (Ayache and Xiao \cite{Ayache Xiao}),
	solutions to stochastic heat equation driven by space-time white
	noise (Dalang, Khoshnevisan and Nualart \cite{Dalang Khoshnevisan Nualart 2007} and \cite{Dalang Khoshnevisan Nualart 2009}, Dalang and Nualart
	\cite{Dalang Nualart 2004}), Mueller and Tribe \cite{Mueller Tribe}  and many more. See Xiao \cite{Xiao 2009}
	for more examples and further information on conditions $(C_{1})$
	and $(C_{2})$.
	$\mathcal{C}_{\rho_{H},d}$ and $\mathcal{H}_{\rho_{H}}^{d}$denotes
	the Bessel-Riesz type capacity and the Hausdorff measure with respect
	to the parabolic metric $\rho_{H}$ of order $d$. Both of these terms are defined next and will be referred as parabolic capacity and parabolic Hausdorff measure respectively.
	
	The corresponding problem for $d$-dimensional Brownian motion $B$ with drift $f$, $d\geq2$, has been considered by Peres and Souissi \cite{Peres Sousi}.
	Precisely
	they showed that for  $f:\mathbb{R}^{+}\longrightarrow\mathbb{R}^{d}$
	$(1/2)$-Hölder continuous function there exists positives constants $\alpha_{1},\alpha_{2}$
	such that for all $x\in\mathbb{R}^{d}$ and all closed set $F\subseteq\mathbb{R}^{d}$ 
	
	\[
	\alpha_{1}\text{Cap}_{M}\left(F\right)\leq\mathbb{P}_{x}\{(B+f)(0,\infty)\cap F\neq\emptyset\}\leq\alpha_{2}\text{Cap}_{M}\left(F\right),
	\]
	where $\text{Cap}_{M}\left(\cdot\right)$ denotes the Martin capacity. At the heart of their method is the strong Markov property which can't be used for fractional Brownian
	motion. Naturally, this begs the question : can we provide similar estimate
	to $\eqref{Chen Xiao Estimate}$ for $d$-dimensional fractional Brownian
	motion $B^H$ of Hurst index $H$ with drift $f$?
	
		Our first objective in this work is to give an answer to this question.
In fact, we established the desired estimates by adjusting the standard proof, which relies on the covering argument for the upper bound and the second moment argument for the lower bound, to take into account the presence of the $H$-Hölder continuous drift $f$. This allowed us to obtain, this time, according to the usual Hausdorff measure
and Bessel-Riesz capacity the upper and lower bounds on
hitting probabilities of the following type
\begin{equation*}
\mathbf{\mathfrak{c}}_{1}\,\mathcal{C}_{d-\beta_1/H}(F)\leq \mathbb{P}\{(B^H+f)(E)\cap F\neq \emptyset \}\leq \mathbf{\mathfrak{c}}_2\,\mathcal{H}^{d-\beta_2/H}(F).
\end{equation*}
Worthy of special mention is the fact that $\beta_1$ and $\beta_2$ are two different constants closely related to the Hausdorff and Minkowski dimensions  of $E$ respectively. In the event that the two dimensions coincide, often this is a consequence
of the existence of a sufficiently regular measure see condition \textbf{(S)} below, we obtain the above estimates with the same constant $\beta=\dim(E)$ the Hausdorff dimension of $E$. With these bounds in hand we draw the conclusion that $\mathbb{P}\{(B^H+f)(E)\cap F\neq \emptyset \}>0$ (resp. $\mathbb{P}\{(B^H+f)(E)\cap F\neq \emptyset \}=0$) for any compact set $F$ such that $\dim(F)>d-\beta/H$ (resp. $\dim(F)<d-\dim(E)/H$). This leads us to consider the question: is there an $\alpha$-Hölder continuous function $f$, $\alpha <H$, for which
	$\mathbb{P}\{(B^H+f)(E)\cap F\neq \emptyset \}>0$?
	
	The idea is then to take $\alpha <H$ such that $\dim(F) > d-\beta/\alpha$ and $B^\alpha$ another fractional Brownian motion with Hurst
	index $\alpha$ possibly defined on different probability space and thereafter to consider $B^H$ as a drift of $B^\alpha$. This induces us to
	bring them together on the same space while preserving their distributions.
	The best way to do this is to work on the product space and to consider
	processes on this latter. Unfortunately, we are unable to have both estimates for the same drift. These results are proved in Section 2.

	A problem related to estimating hitting probabilities for a rondom process $X$ with drift is determining which Borel functions are polar for $X$. Now we recall the definition of polar function for $X$. A Borel function $f:\mathbb{R}^{+}\longrightarrow\mathbb{R}^{d}$ is called polar for $X$ if for any $x\in \mathbb{R}^{d}$,
	\[
	\mathbb{P}\left\{ X_{t}+x=f(t)\,\text{for some }t>0\right\} =0,
	\]
	which means that the process $X-f$ does not hit points. The first
	study of polar functions for Brownian motion in $2$ dimensions appears in Graversen
	\cite{Graversen}. Precisely, he showed that for all $0<\gamma<1/2$, there
	exists a $\gamma$-Hölder continuous function $f:\mathbb{R}^{+}\longrightarrow\mathbb{R}^{2}$
	for which $B+f$ hits points. In \cite{Le Gall}, Le Gall proved that for
	any $1/2$-Hölder continuous function $f$, the process $B+f$ do
	not hits points and asked, for $d\geq3$, whether for each $\gamma<1/d$
	there exist $\gamma$-Hölder continuous functions for which $B+f$
	hits points. This problem has also been studied for the stable process by Mountford in \cite{Mountford}. Recently Antunovi$\acute{\text{c}}$, Peres and Vermesi
	\cite{Antunovic Peres Vermesi} prove first that, for $d\geq2$ and for each $\gamma<1/d$
	there exist $\gamma$-Hölder continuous functions for which  the range of $B+f$, covers an open set almost
	surely, thereby ensuring that $ B + f $ hits points. Moreover, for $d\geq3$, there exists a $1/d$-Hölder continuous
	function, accurately the $d$-dimensional Hilbert curve, such that $B+f$
	hits points. 
	Considering this problem for fractional Brownian motion is our second focus. We begin by establishing, for a general measurable drift, an upper and lower bounds on hitting probabilities as follows
	\begin{align}
	{\fontsize{14}{0} \selectfont \textbf{c}}_1^{-1}\mathcal{C}_{\rho_H,d}(Gr_E(f))\leq \mathbb{P}\left\lbrace \exists t\in E : (B^H+f)(t)=x\right\rbrace\leq {\fontsize{14}{0} \selectfont \textbf{c}}_1\, \mathcal{H}_{\rho_H}^{d}(Gr_E(f)),
	\end{align} 
	where $Gr_E(f)=\{(t,f(t)) : t\in E\}$ is  the graph of $f$ over the set $E$. The above estimates are aimed first and foremost to seek conditions on the drift $f$ for which $B^H+f$ does or does not hit points. 
	The first conclusion that we can draw is that functions with a positive parabolic capacity $\mathcal{C}_{\rho_H, d}(Gr_E (f)) $ hit points, on the other hand those who have parabolic Hausdorff measure $ \mathcal{H}_{\rho_H}^{d}(Gr_E(f))=0$ does not hit points. 
	As a first step, we prove that for any $\alpha<\dim(E)/d \wedge H$ there exists a $\alpha$-Hölder continuous function which is non-polar for $B^H$ obtained as a realization of an independent fractional Brownian motion with Hurst parameter $\alpha$. 

	The relationship between lack of regularity and fractal properties for special classes of functions has been highlighted a long time ago. Frequently graphs of continuous but sufficiently irregular functions are fractal sets what connects the lack of regularity of such functions to the Hausdorff dimension of their graphs. We consider the Weierstrass function as a prototype example of such functions. Our second step is to show that the  one dimensional fractional Brownian motion with drift given by the Weierstrass function hits points with positive probability. The two keys ingredients in the proof are a recent result of Shen \cite{Shen}, which is an improvement on the result of Barański, Bárány and Romanowska \cite{Baranski Barany Romanowska} on a long-standing conjecture concerning the Hausdorff dimension of the graph of the Weierstrass function, giving the exact value of the latter and a comparaison result for the Hausdorff parabolic dimensions with different parameters established by the authors in \cite{Erraoui Hakiki}. 
	Among the properties of Weierstrass nowhere differentiable function most often used are  $\alpha$-Hölder continuity and reverse $\alpha$-Hölder continuity for some $0 <\alpha< 1$. Przytycki and Urbański in \cite{Przytycki Urbanski}  proved that if $f : [0, 1]\rightarrow \mathbb{R}$ is both $\alpha$-Hölder and reverse $\alpha$-Hölder for some $0 < \alpha < 1$, it satisfies $\dim(Gr_{[0, 1]}(f) ) > 1$. Replacing Weierstrass function by such function $f$ we obtain the same result.
	We thought and hoped that this result continues to be true for in higher dimensions, i.e. $d\geq 2$,  but it does not. Precisely, we consider a $d$-dimensional vector-valued function $ f $ where each component is the Weierstrass function for which $\dim_{\rho_{H}}(Gr(f))<d$ leading us to conclude that  $B^H+f$ does not hit points. The above mentioned results constitute the content of Section 3. 
	
	\section{Hitting sets}
	In this section, we consider the problem on hitting probabilities of fractional Brownian motion with deterministic drift.  
	Let $H \in (0,1)$ and $B^{H}_0=\left\lbrace B^{H}_0(t),t \geq 0\right\rbrace $ be a real-valued fractional Brownian motion  of Hurst index $H$ defined on a complete  probability space $(\Omega,\mathcal{F},\mathbb{P})$, i.e. a real valued Gaussian process with stationary increments and covariance function given by $$\mathbb{E}(B^{H}_0(s)B^{H}_0(t))=\frac{1}{2}(|t|^{2H}+|s|^{2H}-|t-s|^{2H}).$$
	Let  $B^{H}_1,...,B^{H}_d$ be $d$ independent copies of $B^{H}_0$, then the stochastic process $B^{H}=\left\lbrace B^{H}(t),t\geq 0\right\rbrace $  given by  
	\[
	B^{H}(t)=(B^{H}_1(t),....,B^{H}_d(t)) ,
	\]
	is called a $d$-dimensional fractional Brownian motion of Hurst index $H \in (0,1)$. 
		
 We consider the following parabolic metric $\rho_H$ on $\mathbb{R}_+\times \mathbb{R}^d$ defined by  
	\begin{equation}
	\rho_H((s,x),(t,y))=\max\{|t-s|^H,\|x-y\|\} \quad \forall (s,x), (t,y)\in \mathbb{R}_+\times \mathbb{R}^d,\label{parabolic metric}
	\end{equation}
	where $\|.\|$ denotes the euclidean metric on $\mathbb{R}^d$. 
	For $\beta>0$ and $E\subset \mathbb{R}_+\times \mathbb{R}^d$, the $\beta$-dimensional Hausdorff measure of $E$ with respect to the metric $\rho_H$ is defined by 
	\begin{equation}
	\mathcal{H}_{\rho_H}^{\beta}(E)=\lim_{\delta \rightarrow 0}\inf \left\{\sum_{n=1}^{\infty}\left(2 r_{n}\right)^{\beta}: E \subseteq \bigcup_{n=1}^{\infty} B_{\rho_H}\left(r_{n}\right), r_{n} \leqslant \delta\right\},\label{parabolic Hausdorff measure}
	\end{equation}
	where $B_{\rho_H}(r)$ denotes an open ball of radius $r$ in the metric space $(\mathbb{R}_+\times \mathbb{R}^d,\rho_H)$. The Bessel-Riesz type capacity of order $\alpha$ on  the metric space $(\mathbb{R}_+\times\mathbb{R}^d,\rho_H)$ is defined by 
	\begin{equation}
	\mathcal{C}_{\rho_H,\alpha}(E)=\left[\inf _{\mu \in \mathcal{P}(E)} \int_{\mathbb{R}_+\times\mathbb{R}^{d}} \int_{\mathbb{R}_+\times\mathbb{R}^{d}} \varphi_{\alpha}(\rho_H(u,v)) \mu(d u) \mu(d v)\right]^{-1},\label{parabolic capacity}
	\end{equation}
	where $\mathcal{P}(E)$ is the family of probability measures carried by $E$ and the function $\varphi_{\alpha}:(0,\infty)\rightarrow (0,\infty)$ is defined by
	\begin{equation}
	\varphi_{\alpha}(r)=\left\{\begin{array}{ll}{r^{-\alpha}} & {\text { if } \alpha>0} \\ {\log \left(\frac{e}{r \wedge 1}\right)} & {\text { if } \alpha=0} \\ {1} & {\text { if } \alpha<0}\end{array}\right..\label{radial kernel}
	\end{equation}
	
	\begin{remark}
		Let $\dim_{\rho_H}$ be the Hausdorff dimension associated to the measure $\mathcal{H}_{\rho_H}^{\beta}$ which is defined as $$\dim_{\rho_H}(E)=\inf\{\beta>0: \mathcal{H}_{\rho_H}^{\beta}(E)=0\} \text{ for all $E\subset \mathbb{R}_+\times \mathbb{R}^d$},$$
		we can verify that $\dim_{\Psi, H}(.)\equiv H \times\dim_{\rho_H}(.)$, where $\dim_{\Psi, H}$ is the $H$-parabolic Hausdorff dimension which was used by Peres and Sousi in \cite{Peres&Sousi2016} in order to study the Hausdorff dimension of the graph and the image of $B^H+f$.
	\end{remark}
The usual $\beta$-dimensional Hausdorff measure, $\beta>0$, and Bessel-Riesz capacity of order $\alpha$ in Euclidean metric $\|.\|$ are denoted by $\mathcal{H}^{\beta}$ and  $\mathcal{C}_{\alpha}$ respectively. 
 $\mathcal{H}^{\beta}$ is assumed equal to $1$ whenever $\beta\leq 0$.
Let $I = [\varepsilon_0, 1]$, where $\varepsilon_0\in (0,1)$ is a fixed constant.
	First we have the following result.
	\begin{theorem}\label{main theorem hitting}
		Let $\{B^H(t), t\in [0,1]\}$ be a $d$-dimensional fractional Brownian motion and $f : [0,1] \rightarrow \mathbb{R}^d$ a Hölder continuous function with order $H$ and constant $K$. If $F\subseteq \mathbb{R}^d$ is a compact subset of $\mathbb{R}^d$ and $E$ is a Borel subset of $I$, then 
		\begin{equation}
		c_1^{-1}\mathcal{C}_{\rho_H,d}(E\times F)\leq \mathbb{P}\{(B^H+f)(E)\cap F\neq \emptyset \}\leq c_1\mathcal{H}_{\rho_H}^{d}(E\times F),\label{upper-lower bounds 1}
		\end{equation}
		where $c_1\geq 1$ is finite constant which depends on $I$, $F$, $H$ and $K$ only.
	\end{theorem}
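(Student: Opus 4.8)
The plan is to adapt the classical capacity–Hausdorff-measure framework for Gaussian random fields (as in Chen–Xiao \cite{Chen Xiao 2012}) to the shifted process $B^H+f$, exploiting the fact that the drift $f$ is $H$-Hölder continuous with the \emph{same} exponent $H$ that governs the modulus of continuity and the small-ball behaviour of $B^H$. The key structural observation is that the map $t\mapsto(t,(B^H+f)(t))$ should be compared, through the parabolic metric $\rho_H$, with the unshifted map $t\mapsto(t,B^H(t))$: since $\rho_H((s,(B^H+f)(s)),(t,(B^H+f)(t)))$ differs from $\rho_H((s,B^H(s)),(t,B^H(t)))$ only through the term $\|f(t)-f(s)\|\le K|t-s|^H$, the drift perturbs distances by at most a bounded multiple of the "time" coordinate's contribution $|t-s|^H$. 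Consequently all the two-sided estimates on hitting probabilities for $B^H$ survive, with the constant $c_1$ now allowed to depend on $K$ in addition to $I$, $F$, $H$.

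For the \textbf{upper bound}, I would run the standard covering argument. Fix $\delta>0$ and a cover of $E\times F$ by parabolic balls $B_{\rho_H}(r_n)$, $r_n\le\delta$; each such ball is contained in a set of the form $J_n\times D_n$ with $J_n$ a time interval of length $\asymp r_n^{1/H}$ and $D_n$ a Euclidean ball of radius $\asymp r_n$. The event $\{(B^H+f)(E)\cap F\neq\emptyset\}$ forces, for some $n$, the process $B^H+f$ restricted to $J_n$ to enter a ball of radius $Cr_n$; using the Gaussian tail bound together with the uniform modulus of continuity of $B^H$ on $I$ — and absorbing the deterministic oscillation of $f$ on $J_n$, which is $\le K r_n$ — one shows $\mathbb{P}\{(B^H+f)(J_n)\cap D_n\neq\emptyset\}\le c\,r_n^{\,d}$. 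Summing over $n$, taking the infimum over covers and letting $\delta\to0$ yields $\mathbb{P}\{(B^H+f)(E)\cap F\neq\emptyset\}\le c_1\,\mathcal{H}_{\rho_H}^{d}(E\times F)$. The role of restricting to $E\subseteq I=[\varepsilon_0,1]$ is to have uniform (in $t$) control of the variance $t^{2H}\in[\varepsilon_0^{2H},1]$ and of the local nondeterminism constants.

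For the \textbf{lower bound}, I would use the second-moment (Paley–Zygmund / energy) method. Given a probability measure $\mu$ on $E\times F$ with finite $\rho_H$-energy $\int\!\int\varphi_d(\rho_H(u,v))\,\mu(du)\mu(dv)$, define a family of random measures $J_\eps$ on $E\times F$ by $J_\eps(A)=\int_A (2\pi\eps)^{-d/2}\exp\big(-\|B^H(t)+f(t)-x\|^2/(2\eps)\big)\,\mu(dt,dx)$, and let $Z_\eps=J_\eps(E\times F)$. A first-moment computation, using that $B^H(t)$ is Gaussian with variance bounded above and below on $I$, gives $\mathbb{E}[Z_\eps]\ge c>0$ uniformly in $\eps$. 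For the second moment, $\mathbb{E}[Z_\eps^2]$ reduces to a double integral whose integrand involves the density of $(B^H(s)+f(s),B^H(t)+f(t))$ near $(x,y)$; here the two-point local nondeterminism of fractional Brownian motion on $I$ bounds the conditional variance below by $c|t-s|^{2H}$, so the Gaussian density is $\le c\,(|t-s|^{2H}+\eps)^{-d/2}\exp(-c\|x-y\|^2/(|t-s|^{2H}+\eps))$, and — crucially — the drift only shifts the means, leaving these bounds untouched. Integrating out and comparing with $\rho_H((s,x),(t,y))=\max\{|t-s|^H,\|x-y\|\}$ shows $\mathbb{E}[Z_\eps^2]\le c\int\!\int\varphi_d(\rho_H(u,v))\,\mu(du)\mu(dv)$. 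The Paley–Zygmund inequality then gives $\mathbb{P}\{Z_\eps>0\}\ge(\mathbb{E}Z_\eps)^2/\mathbb{E}[Z_\eps^2]\ge c\,/\,\mathcal{E}_{\rho_H,d}(\mu)$; letting $\eps\to0$ along a subsequence, a uniform-integrability argument identifies $\{\liminf Z_\eps>0\}$ with (a subset of) $\{(B^H+f)(E)\cap F\neq\emptyset\}$, and optimizing over $\mu\in\mathcal{P}(E\times F)$ produces the bound $c_1^{-1}\mathcal{C}_{\rho_H,d}(E\times F)\le\mathbb{P}\{(B^H+f)(E)\cap F\neq\emptyset\}$.

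The main obstacle is the second-moment estimate: one must verify carefully that the strong local nondeterminism of $B^H$ on the compact time interval $I$ is \emph{unaffected} by the deterministic drift — which is immediate since $f$ only changes means — and then that the resulting two-point density bound integrates against $\mu$ to something controlled by the parabolic energy, which requires matching the Gaussian off-diagonal decay in $\|x-y\|$ and the polynomial factor in $|t-s|$ to the single scale $\rho_H=\max\{|t-s|^H,\|x-y\|\}$. A secondary technical point is the passage $\eps\to0$: one needs either an $L^2$-bounded-martingale/uniform-integrability argument or a compactness argument to ensure that positivity of $\liminf_\eps Z_\eps$ genuinely certifies that the graph of $B^H+f$ over $E$ meets $F$, rather than merely coming close; this is handled exactly as in the Gaussian-random-field literature, with the $H$-Hölder bound on $f$ guaranteeing that $(B^H+f)(E)$ is a.s. compact so that "coming arbitrarily close" upgrades to "hitting".
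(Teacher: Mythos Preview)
Your overall architecture matches the paper's proof: covering argument plus Lemma~3.1 of \cite{Bierme Lacaux Xiao 2009} for the upper bound, random approximate occupation measures and Paley--Zygmund for the lower bound. The upper bound is handled correctly.

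There is, however, a genuine gap in your second-moment step. You write that local nondeterminism gives a two-point density bound of the form
\[
c\,(|t-s|^{2H}+\eps)^{-d/2}\exp\!\Big(-c\,\frac{\|x-y\|^2}{|t-s|^{2H}+\eps}\Big),
\]
and justify this by saying ``the drift only shifts the means, leaving these bounds untouched.'' That last clause is exactly where the argument fails as written. Shifting the means replaces $(x,y)$ by $(x-f(s),\,y-f(t))$, so the exponent you actually obtain from Lemma~3.2 of \cite{Bierme Lacaux Xiao 2009} contains $\|x-y+f(t)-f(s)\|^2$, not $\|x-y\|^2$. Passing from the former to the latter is \emph{the} place where the $H$-H\"older hypothesis on $f$ enters the lower bound, and it is not automatic. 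The paper does this explicitly: it expands $\|x-y+f(t)-f(s)\|^2$, uses the lower bound $\det\Gamma_{1/n}(s,t)\ge c_{10}|t-s|^{2H}$ coming from strong local nondeterminism, and then observes that
\[
\frac{\|f(t)-f(s)\|}{\sqrt{\det\Gamma_{1/n}(s,t)}}\le \frac{K|t-s|^H}{\sqrt{c_{10}}\,|t-s|^H}=\frac{K}{\sqrt{c_{10}}}
\]
is bounded uniformly in $s,t$; this allows the cross term $\exp\!\big(c\,\|x-y\|\cdot\|f(t)-f(s)\|/\det\Gamma\big)$ to be absorbed into the Gaussian factor at the cost of halving the constant in the exponent and picking up a multiplicative constant depending on $K$. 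Without this step you cannot compare the integrand to $\varphi_d(\rho_H((s,x),(t,y)))$, and the energy bound does not follow. Your opening paragraph hints that the H\"older condition controls the perturbation, but in the lower-bound paragraph you need to invoke it at precisely this point rather than claim the bound is ``untouched.''
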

	
	An important property of the fractional Brownian motion that will serve us well into the proof is the strong local nondeterminism which follows from Lemma 7.1 of \cite{Pitt}. Precisely, there exists a constant $0<C<\infty$ such that for all integers $n\geq 1$ and all $t_1,...,t_n,t \in [0,1]$, we have \begin{align}
	Var\left(B^H_0(t)/B^H_0(t_1),..., B^H_0(t_n)\right) \geq C \min _{0 \leq j \leq n}\left|t-t_{j}\right|^{2 H},\label{SLND FBM}
	\end{align}  
	where $Var\left(B^H_0(t)/B^H_0(t_1),..., B^H_0(t_n)\right)$ denotes the conditional variance of $B^H_0(t)$ given $B^H_0(t_1),..., B^H_0(t_n)$ and $t_0=0$.
	
	To prove the above theorem, we will make use of  the following two lemmas proved by Biermé, Lacaux and Xiao in \cite{Bierme Lacaux Xiao 2009} for a general class of Gaussian processes to which fractional Brownian motion belongs. In fact they will be used to obtain the upper and lower bounds of \eqref {upper-lower bounds 1} respectively.
	\begin{lemma}[Lemma 3.1,\cite{Bierme Lacaux Xiao 2009}]\label{l4}
		Let $\{B^H(t): t\in [0,1]\}$ be a fractional Brownian motion. For any constant $M>0$, there exist a positive constants $c_2$ and $\delta_0$ such that for all $r\in (0,\delta_0)$, $t\in I$ and all $x\in [-M,M]^d$ we have
		\begin{equation}
		\mathbb{P}\left\{\inf _{ s\in I,|s-t|^H\leq r}\|B^H(s)-x\| \leqslant r\right\} \leqslant c_2 r^{d}.\label{Gaussian estim 1}
		\end{equation}
	\end{lemma}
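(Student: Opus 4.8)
\emph{Sketch of the argument.} Since $J:=\{s\in I:|s-t|^{H}\le r\}=I\cap[t-r^{1/H},\,t+r^{1/H}]$ is a compact interval of Euclidean length $\le 2r^{1/H}$ and $B^{H}$ is continuous, the event in \eqref{Gaussian estim 1} equals $\{B^{H}(J)\cap\bar B(x,r)\neq\emptyset\}$, where $\bar B(x,r)$ is the closed Euclidean ball. The plan is to condition on the value of $B^{H}$ at a single ``anchor'' time $s_{0}$ lying at Euclidean distance $\asymp r^{1/H}$ (hence $\rho_{H}$-distance $\asymp r$) from the whole block $J$, to observe that conditionally $\{B^{H}(s):s\in J\}$ is confined to a ball of radius $O(r)$ around $B^{H}(s_{0})$, and then to note that a hit of $\bar B(x,r)$ forces $B^{H}(s_{0})$ into an $O(r)$-neighbourhood of $x$, an event of probability $O(r^{d})$ because $B^{H}(s_{0})$ has a bounded density. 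Concretely I would take $s_{0}:=t-2r^{1/H}$ if $t>(1+\varepsilon_{0})/2$ and $s_{0}:=t+2r^{1/H}$ otherwise; then for $r<\delta_{0}=\delta_{0}(\varepsilon_{0},H)$ small enough one has $s_{0}\in[\varepsilon_{0}/2,1]$ and $r^{1/H}\le|s-s_{0}|\le 3r^{1/H}$ for every $s\in J$.

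Put $m:=B^{H}(s_{0})$. Two Gaussian facts are needed. \emph{(a) Density bound.} The coordinates of $m$ are $N(0,s_{0}^{2H})$ with $s_{0}^{2H}\in[(\varepsilon_{0}/2)^{2H},1]$, so $\mathbb{P}(\|m-x\|\le\rho)\le c_{3}\rho^{d}$ for all $\rho>0$ and all $x$, with $c_{3}=c_{3}(\varepsilon_{0},d)$. \emph{(b) Conditional confinement.} From the covariance function of $B^{H}$, $\mathbb{E}[B^{H}(s)\mid m]=\theta(s)m$ with $\theta(s)=\operatorname{Cov}(B^{H}_{1}(s),B^{H}_{1}(s_{0}))/s_{0}^{2H}$, and --- crucially --- $|\theta(s)-1|\le C r^{\gamma}$ for every $s\in J$, where $\gamma:=\min(1/H,2)>1$. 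Moreover the residual field $\widetilde Y(s):=B^{H}(s)-\mathbb{E}[B^{H}(s)\mid m]$ is independent of $m$ (Gaussianity), has canonical metric $\le\sqrt d\,|s-s'|^{H}$, and $\sup_{\|v\|=1,\,s\in J}\operatorname{Var}(\langle v,\widetilde Y(s)\rangle)\le\sup_{s\in J}|s-s_{0}|^{2H}\le 9^{H}r^{2}$. Since $(J,|s-s'|^{H})$ has small metric entropy, Dudley's bound gives $\mathbb{E}[R]\le C_{1}r$ for $R:=\sup_{s\in J}\|\widetilde Y(s)\|=\sup_{\|v\|=1,\,s\in J}\langle v,\widetilde Y(s)\rangle$, and Borell--TIS gives $\mathbb{P}(R\ge\mu r)\le e^{-C_{2}\mu^{2}}$ for $\mu\ge\lambda_{0}$; all constants depend only on $\varepsilon_{0},d,H$.

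To finish, suppose $\|B^{H}(s)-x\|\le r$ for some $s\in J$. Writing $B^{H}(s)=m+\widetilde Y(s)+(\theta(s)-1)m$ gives $\|m-x\|\le r+R+Cr^{\gamma}\|m\|\le r+R+Cr^{\gamma}(\|m-x\|+M\sqrt d)$, and since $\gamma>1$ and $\|x\|\le M\sqrt d$ we get, for $r$ small, $\|m-x\|\le 3r+2R$. Hence, using that $R$ is independent of $m$,
\[
\mathbb{P}\{B^{H}(J)\cap\bar B(x,r)\neq\emptyset\}\le\mathbb{P}(\|m-x\|\le 3r+2R)=\int_{\mathbb{R}^{d}}\mathbb{P}\!\Big(R\ge\tfrac12\|y-x\|-\tfrac32 r\Big)\,p_{m}(y)\,dy .
\]
Splitting at $\|y-x\|=K_{0}r$ for a suitable $K_{0}$: on $\{\|y-x\|\le K_{0}r\}$ bound the probability by $1$ and use $p_{m}\le c_{3}$, contributing $\le Cr^{d}$; on $\{\|y-x\|>K_{0}r\}$ insert $\mathbb{P}(R\ge\tfrac12\|y-x\|-\tfrac32 r)\le e^{-C_{2}'\|y-x\|^{2}/r^{2}}$, which against $p_{m}\le c_{3}$ contributes $\le c_{3}\int_{\mathbb{R}^{d}}e^{-C_{2}'\|z\|^{2}/r^{2}}\,dz\asymp r^{d}$. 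Adding the two pieces proves \eqref{Gaussian estim 1} with $c_{2}=c_{2}(\varepsilon_{0},d,H,M)$.

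The step I expect to be the crux is the bound $|\theta(s)-1|\le Cr^{\gamma}$ with exponent $\gamma>1$: this is exactly what makes ``a single far observation confines the block at spatial scale $r$'' hold \emph{without} a logarithmic loss (an exponent equal to $1$ would force truncating $\|m\|$ at a level $\asymp\sqrt{\log(1/r)}$ and would leave a spurious factor $(\log 1/r)^{d/2}$ after summing over oscillation scales). It is here that the non-stationary, long-memory covariance of fractional Brownian motion enters essentially, and this is the computational core of Lemma~3.1 of \cite{Bierme Lacaux Xiao 2009}; the remaining steps are routine Gaussian estimates. It is worth noting that the strong local nondeterminism \eqref{SLND FBM} plays no role in this lemma --- only two-sided control of increments is used --- it will instead be the key tool for the matching lower bound.
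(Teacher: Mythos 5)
The paper does not prove this lemma at all: it is quoted verbatim from Bierm\'e--Lacaux--Xiao \cite{Bierme Lacaux Xiao 2009} (their Lemma 3.1), so there is no in-paper proof to compare against. Your argument is correct and is essentially the standard one behind that result: condition on a single observation, use that the residual field on the block $J$ is independent of it and confined to spatial scale $r$ (Dudley plus Borell--TIS), and finish with the uniformly bounded density of the observation, which is where $t\in I=[\varepsilon_0,1]$ enters. Your unstated estimate $|\theta(s)-1|\le Cr^{\gamma}$ does hold, via $1-\theta(s)=\tfrac12\bigl(s_0^{2H}-s^{2H}+|s-s_0|^{2H}\bigr)/s_0^{2H}$ together with the Lipschitz continuity of $u\mapsto u^{2H}$ on $[\varepsilon_0/2,1]$; and your closing remark that strong local nondeterminism is not needed here is also right.

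Two points of divergence from the reference are worth flagging. First, the shifted anchor $s_0=t\pm 2r^{1/H}$ is unnecessary: for the upper bound you only need the conditional variance bounded \emph{above} by $O(r^{2})$, which already holds with $s_0=t$ (conditional variance $\le \mathbb{E}(B^H_0(s)-B^H_0(t))^2=|s-t|^{2H}\le r^2$), and this is what the cited proof does; placing the anchor at $\rho_H$-distance $\asymp r$ from the block is the device needed for \emph{lower} bounds on conditional variance, not here. Second, your diagnosis of the crux is off: the exponent $\gamma>1$ is not what saves the argument from a $(\log 1/r)^{d/2}$ loss. Your own self-absorption step $\|m-x\|\le r+R+Cr^{\gamma}\bigl(\|m-x\|+M\sqrt d\bigr)$ works verbatim with $\gamma=1$ (one only needs $Cr^{\gamma}<1/2$ and $\|x\|\le M\sqrt d$), so the argument is robust under the weaker bound $|\theta(s)-1|\le Cr$ that follows from Cauchy--Schwarz alone. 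The genuinely essential ingredients are the independence of $R$ from $m$ and the bounded density of $m$; the rest is routine. Neither quibble affects the validity of the proof.
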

	
	\begin{lemma}[Lemma 3.2,\cite{Bierme Lacaux Xiao 2009} ]\label{l5}
		Let $B^H$ be a fractional Brownian motion. Then there exists a positive and finite constants $c_3$ and $c_4$ such that for all $\epsilon \in (0,1)$, $s,t\in I$ and $x,y\in \mathbb{R}^d$ we have

		\begin{align}
		& \dint_{\mathbb{R}^{2d}}e^{-i(\langle\xi,x\rangle+\langle\eta,y\rangle)}\exp\left(-\dfrac{1}{2}(\xi,\eta)\left(\varepsilon I_{2d}+\text{Cov}(B^{H}(s),B^{H}(t))\right)(\xi,\eta)^{T}\right)d\xi d\eta \nonumber \\
		\nonumber  \\
		& \leq \dfrac{(2\pi)^{d}}{\left[\text{det}\left(\Gamma_{\varepsilon}(s,t)\right)\right]^{d/2}}\exp\left(-\dfrac{c_{3}}{2}\dfrac{\|x-y\|^{2}}{\text{det}\left(\Gamma_{\varepsilon}(s,t)\right)}\right)\label{Gaussianestim2}\\
		\nonumber 	\\
		& \leq \dfrac{c_{4}}{\left(\rho_{H}((s,x),(t,y))\right)^{d}},\nonumber  
		\end{align}
		where $\Gamma_{\varepsilon}(s,t):=I_2\varepsilon+\text{Cov}(B^H_0(s),B^H_0(t))$, $I_{2d}$ and $I_2$ are the identities matrices of order $2d$ and $2$ respectively, $Cov(B^H(s),B^H(t))$ and $Cov(B^H_0(s),B^H_0(t))$ denote the covariance matrix of the random vectors $(B^H(s),B^H(t))$ and $(B^H_0(s),B^H_0(t))$ respectively, and $(\xi,\eta)^T$ is the transpose of the row vector $(\xi,\eta)$.
	\end{lemma}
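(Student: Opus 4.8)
\section*{Proof proposal for Lemma \ref{l5}}

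The plan is to compute the left-hand integral exactly as the Fourier transform of a centered Gaussian density and then to read off the two inequalities from elementary linear algebra together with the strong local nondeterminism \eqref{SLND FBM}. Set $A:=\varepsilon I_{2d}+\text{Cov}(B^H(s),B^H(t))$, which is positive definite since $\varepsilon>0$. Because $B^H_1,\dots,B^H_d$ are independent copies of $B^H_0$, after reordering the coordinates $A$ acquires the Kronecker form $A=\Gamma_\varepsilon(s,t)\otimes I_d$, so that $\det A=(\det\Gamma_\varepsilon(s,t))^{d}$ and $A^{-1}=\Gamma_\varepsilon(s,t)^{-1}\otimes I_d$. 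Fourier inversion for a centered Gaussian then gives, with $z=(x,y)$,
\begin{equation*}
\dint_{\mathbb{R}^{2d}}e^{-i(\langle\xi,x\rangle+\langle\eta,y\rangle)}\exp\Big(-\tfrac12(\xi,\eta)A(\xi,\eta)^{T}\Big)\,d\xi\,d\eta=\frac{(2\pi)^{d}}{(\det\Gamma_\varepsilon(s,t))^{d/2}}\exp\Big(-\tfrac12\,z\,A^{-1}z^{T}\Big),
\end{equation*}
so that the prefactor already matches the target and it remains only to estimate the quadratic form $z\,A^{-1}z^{T}$.

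For the first inequality I would use the Kronecker structure to split the form coordinatewise,
\begin{equation*}
z\,A^{-1}z^{T}=\sum_{k=1}^{d}(x_k,y_k)\,\Gamma_\varepsilon(s,t)^{-1}\,(x_k,y_k)^{T},
\end{equation*}
and minimize each summand subject to $x_k-y_k=w$. For a positive definite $2\times2$ matrix $\Gamma$ one has $\min\{v\,\Gamma^{-1}v^{T}:(1,-1)v^{T}=w\}=w^{2}/[(1,-1)\Gamma(1,-1)^{T}]$, and writing $R=\mathbb{E}[B^H_0(s)B^H_0(t)]=\tfrac12(s^{2H}+t^{2H}-|t-s|^{2H})$ for the off-diagonal entry yields $(1,-1)\Gamma_\varepsilon(s,t)(1,-1)^{T}=2\varepsilon+|t-s|^{2H}$. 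Summing the resulting bounds over $k$ gives
\begin{equation*}
z\,A^{-1}z^{T}\ \geq\ \frac{\|x-y\|^{2}}{2\varepsilon+|t-s|^{2H}}.
\end{equation*}
Hence the first stated inequality follows once the determinant bound $\det\Gamma_\varepsilon(s,t)\geq c_3\,(2\varepsilon+|t-s|^{2H})$ is in hand, for then $z\,A^{-1}z^{T}\geq c_3\|x-y\|^{2}/\det\Gamma_\varepsilon(s,t)$.

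This determinant estimate is the crux of the argument and the only place where \eqref{SLND FBM} is invoked. Realizing the regularization $\varepsilon I_2$ by independent standard normals $Z_1,Z_2$ independent of $B^H$, put $U_s=B^H_0(s)+\sqrt\varepsilon\,Z_1$ and $U_t=B^H_0(t)+\sqrt\varepsilon\,Z_2$, so that $\det\Gamma_\varepsilon(s,t)=(\varepsilon+s^{2H})\,\mathrm{Var}(U_t\mid U_s)$. Conditioning on the finer data $(B^H_0(s),Z_1)$ can only decrease variance, which gives $\mathrm{Var}(U_t\mid U_s)\geq\varepsilon+\mathrm{Var}(B^H_0(t)\mid B^H_0(s))$, and \eqref{SLND FBM} with $n=1$ bounds the last term below by $C\min(|t-s|^{2H},t^{2H})$. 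Since $s,t\in I=[\varepsilon_0,1]$ and $\varepsilon\in(0,1)$, the factor $\varepsilon+s^{2H}$ is bounded below by $\varepsilon_0^{2H}$ while $2\varepsilon+|t-s|^{2H}\leq3$; distinguishing the regimes $|t-s|^{2H}\leq t^{2H}$ and $|t-s|^{2H}>t^{2H}$ then produces a constant $c_3>0$ depending only on $\varepsilon_0,H$ and $C$. I expect the careful treatment of these two regimes, together with the use of $I$ being bounded away from $0$, to be the main technical point.

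For the second inequality, write $\Delta:=\det\Gamma_\varepsilon(s,t)$ and $\rho_H((s,x),(t,y))^{d}=\max(|t-s|^{Hd},\|x-y\|^{d})$, and bound the two candidate maximizers separately. The determinant estimate already gives $|t-s|^{2H}\leq\Delta/c_3$, hence $|t-s|^{Hd}/\Delta^{d/2}\leq c_3^{-d/2}$, which controls the temporal term after discarding the exponential factor; for the spatial term the substitution $u=\|x-y\|^{2}/\Delta$ turns $(2\pi)^{d}\Delta^{-d/2}\|x-y\|^{d}e^{-c_3\|x-y\|^{2}/(2\Delta)}$ into $(2\pi)^{d}u^{d/2}e^{-c_3u/2}$, which is bounded over $u\geq0$. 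Adding the two contributions furnishes a finite constant $c_4$ depending only on $d,\varepsilon_0,H$ and $C$, completing the proof.
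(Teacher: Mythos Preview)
The paper does not give its own proof of this lemma; it is quoted verbatim as Lemma~3.2 of Bierm\'e--Lacaux--Xiao \cite{Bierme Lacaux Xiao 2009} and used as a black box. So there is nothing in the present paper to compare your argument against.

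That said, your proof is correct and is essentially the argument that appears in the cited reference. The exact Gaussian Fourier computation together with the Kronecker factorization $A=\Gamma_\varepsilon(s,t)\otimes I_d$ is the right starting point; the constrained minimization $\min\{v\,\Gamma^{-1}v^{T}:(1,-1)v^{T}=w\}=w^{2}/[(1,-1)\Gamma(1,-1)^{T}]$ is exactly how the numerator $\|x-y\|^{2}$ enters; and the reduction to the determinant estimate $\det\Gamma_\varepsilon(s,t)\geq c_3(2\varepsilon+|t-s|^{2H})$ via the auxiliary variables $U_s,U_t$ and \eqref{SLND FBM} is the standard way this is handled for processes satisfying the two-point local nondeterminism. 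Your treatment of the second inequality by separately controlling the temporal and spatial contributions to $\rho_H$ is also the usual route (and is mirrored later in the paper, in \eqref{1st estimation}--\eqref{2nd estimation}, when the authors re-derive a version of this bound in the presence of the drift). One small stylistic point: in the final step you say ``adding the two contributions'', but what you have actually shown is that the middle expression is bounded by a constant times \emph{each} of $|t-s|^{-Hd}$ and $\|x-y\|^{-d}$, hence by a constant times the minimum, which is dominated by $c_4/\rho_H((s,x),(t,y))^{d}$; no addition is needed.
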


	
	\begin{proof}[Proof of Theorem \ref{main theorem hitting}]
		First of all, we note that the proof of the upper bound in \eqref {upper-lower bounds 1} is similar to that of Theorem 2.1 in \cite {Chen Xiao 2012} which relies on the use of a simple covering argument.
		Since $F$ is compact set there exists a  constant $M_0>0$ such that $F\subset [-M_0,M_0]^d$. 
		Let $M_1=\sup _{s\in I}\|f(s)\|$ and $M_2=M_0+M_1$. Then for all $(t,y)\in I\times F$ we have $y+f(t) \in [-M_2,M_2]^d$. Applying Lemma \ref{l4} with the constant $M_2$ leads to the existence of a positive constants $c_5$ and $\delta_1$ such that for all $r\in (0,\delta_1)$ and  $(t,y)\in I\times F$ we have
		\begin{align}
		\mathbb{P}\left\{\inf_{s\in I,|s-t|^{H}\leq  r}\|B^H(s)-(y-f(t))\| \leq r\right\}\leq c_{5} r^d\label{proba estim},
		\end{align}
		where $c_{5}$ and $\delta_1$ depend only on $I,H$ and $M_2$.
		Now let us choose an arbitrary constant $\gamma>\mathcal{H}_{\rho_H}^{d}(E\times F)$.
		Then there is a covering of $E\times F$ by balls $\{B_{\rho_H}((t_i,y_i),r_i), i\geq 1\}$ in $\mathbb{R}_+\times\mathbb{R}^d$ such that  $r_i<\delta_1/1+K$ for all $i\geq 1$, where $K$ is the Hölder constant of $f$, and 
		\begin{equation}
		E\times F\subseteq \bigcup_{i=1}^{\infty}B_{\rho_H}((t_i,y_i),r_i)\quad  \text{with }\quad  \sum_{i=1}^{\infty}(2r_i)^{d}\leq \gamma.\label{Hausdorff covering}
		\end{equation}
		It follows that
		
		\begin{align}
		\left\{ (B^{H}+f)(E)\cap F\neq\emptyset\right\}  & =\left\{\,\exists\,(t,y)\in E\times F\,:\left(B^{H}+f\right)(t)=y\right\}  \nonumber \\
		& \subseteq\bigcup_{i=1}^{\infty}\left\{ \left(B^{H}+f\right)\left(\left(t_{i}-r_{i}^{1/H},t_{i}+r_{i}^{1/H}\right)\right)\cap B(y_{i},r_{i})\neq\emptyset\right\}. \label{event covering}
		\end{align}
		As a first step, it is easy to see that 
		$$\left\{ (B^H+f)\left(\left(t_i-r_i^{1/H},t_i+r_i^{1/H}\right)\right)\cap B(y_i,r_i) \neq \emptyset \right\}=\left\{ \inf_{ |s-t_i|^{H}< r_i}\|(B^H+f)(s)-y_i\| < r_i\right\}.$$
		On the other hand since $f$ is $H$-Hölder continuous then  for  all $s \in \left(t_i-r_i^{1/H},t_i+r_i^{1/H}\right)$ we have 
		\begin{align*}
		\left\{\|(B^H+f)(s)-y_i\|< r_i\right\}&\subset 
		\left\{\|B^H(s)-(y_i-f(t_i))\|< (1+K)r_i\right\}.
		\end{align*}
		This enables us to obtain
		\begin{align*}
		\left\{ (B^H+f)\left(\left(t_i-r_i^{1/H},t_i+r_i^{1/H}\right)\right)\cap B(y_i,r_i) \neq \emptyset \right\} \subset \left\{\inf_{ |s-t_i|^H< c_6 r_i}\|B^H(s)-(y_i-f(t_i))\| < c_6 r_i\right\}.
		\end{align*} 
		where  $c_6:=K+1>1$.
		Combining $\eqref{proba estim}$, $\eqref{Hausdorff covering}$ and  $\eqref{event covering}$ we derive that $$\mathbb{P}\{(B^H+f)(E)\cap F\neq \emptyset\}\leq c_{7}\gamma,$$ where $c_{7}$ depends only on $I,H$ and $K$. Let  $\gamma\downarrow \mathcal{H}_{\rho_h}^{d}(E\times F)$, the upper bound in $(\ref{upper-lower bounds 1})$ follows.
		
		The lower bound in \eqref{upper-lower bounds 1} can be proved by using a second moment argument. We assume that $\mathcal{C}_{\rho_H,d}(E\times F)>0$ otherwise the lower bound is obvious. We can see easily from \eqref{parabolic capacity} that there is a probability measure $\mu$ on $E\times F$ such that 
		\begin{equation}
		\mathcal{E}_{\rho_H,d}(\mu):= \int_{\mathbb{R}_+\times\mathbb{R}^{d}} \int_{\mathbb{R}_+\times\mathbb{R}^{d}} \frac{\mu(d u) \mu(d v)}{(\rho_H(u,v))^d} \leq\frac{2}{\mathcal{C}_{\rho_H,d}(E \times F)}\label{estimation energy}.
		\end{equation} 
		We consider the family of random measures $\left\lbrace \mu_n, n\geq 1\right\rbrace $  on $E\times F$ defined by
		\begin{equation}
		\begin{aligned} \int_{E\times F} g(s,x)\; \mu_{n}(d s,d x) &=\int_{E \times F} (2 \pi n)^{d / 2} \exp \left(-\frac{n\|B^H(s)+f(s)-x\|^2}{2}\right) g(s,x) \,\mu(d s,d x)\\ &=\int_{E \times F} \int_{\mathbb{R}^{d}} \exp \left(-\frac{\|\xi\|^{2}}{2 n}+i\left\langle\xi, B^H(s)+f(s)-x\right\rangle\right) g(s,x) \, d \xi \, \mu(d s,d x), \end{aligned}\label{seq random measure 1}
		\end{equation}
		thanks to the characteristic function of a Gaussian vector. Here $g$ is an arbitrary measurable function on $\mathbb{R}_+\times \mathbb{R}^d$. Our aim is  to show that  $\left\lbrace \mu_n, n\geq 1\right\rbrace $ has a subsequence which converges weakly to a finite measure $\nu$ supported on the set $\{(s,x)\in E \times F : B^H(s)+f(s)=x\}$.
		To carry out this goal, we will start by establishing the following inequalities 
		\begin{align}
		\mathbb{E}(\|\mu_n\|)\geqslant c_{8},\quad \quad \mathbb{E}(\|\mu_n\|^2)\leqslant c_{9} \mathcal{E}_{\rho_H,d}(\mu),\label{esp energy}
		\end{align}
		which constitute together with the Paley-Zygmund inequality  the cornerstone of the proof.
		Here $\|\mu_n\|$ denotes the total mass of $\mu_n$.  We emphasize that the positive constants $c_{8}$ and $c_{9}$ are independent of $\mu$ and $n$.
		By \eqref{seq random measure 1}, Fubini's theorem and the use of the characteristic function of a Gaussian vector we have 
		\begin{align}\label{th 1 claim 1}
		\begin{aligned}\mathbb{E}(\|\mu_{n}\|) & =\int_{E\times F}\int_{\mathbb{R}^{d}}e^{-i\left\langle \xi,x-f(s)\right\rangle }\exp\left(-\frac{\|\xi\|^{2}}{2n}\right)\mathbb{E}\left(e^{i\left\langle \xi,B^{H}(s)\right\rangle }\right)\,d\xi\,\mu(ds,dx)\\
		& =\int_{E\times F}\int_{\mathbb{R}^{d}}e^{-i\left\langle \xi,x-f(s)\right\rangle }\exp\left(-\frac{1}{2}\left(\frac{1}{n}+s^{2H}\right)\|\xi\|^{2}\right)\,d\xi\,\mu(ds,dx)\\
		& =\int_{E\times F}\left(\frac{2\pi}{n^{-1}+s^{2H}}\right)^{d/2}\exp\left(-\frac{\|x-f(s)\|^{2}}{2\left(n^{-1}+s^{2H}\right)}\right)\,\mu(ds,dx)\\
		& \geq\int_{E\times F}\left(\frac{2\pi}{1+s^{2H}}\right)^{d/2}\exp\left(-\frac{\|x-f(s)\|^{2}}{2s^{2H}}\right)\,\mu(ds,dx)\\
		& \geq c_{8}>0.
		\end{aligned}
		\end{align}
		Since $F$ and $f$ are bounded and $\mu$ is a probability measure we conclude that $c_{7}$ is independent of $\mu$ and $n$. This gives the first inequality in $(\ref{esp energy})$.
		
		We will now turn our attention to the second inequality  in $(\ref{esp energy})$.  By $(\ref{seq random measure 1})$ and Fubini's theorem again we obtain
		\[
		\begin{aligned}\mathbb{E}\left(\|\mu_{n}\|^{2}\right)= & \int_{E\times F}\int_{E\times F}\,\mu(ds,dx)\,\mu(dt,dy)\int_{\mathbb{R}^{2d}}e^{-i(\left\langle \xi,x-f(s)\right\rangle +\left\langle \eta,y-f(t)\right\rangle )}\\
		& \times\exp\left(-\frac{1}{2}(\xi,\eta)(n^{-1}I_{2d}+\text{Cov}(B^{H}(s),B^{H}(t)))(\xi,\eta)^{T}\right)d\xi d\eta\\
		& \leq\int_{E\times F}\int_{E\times F}\,\frac{(2\pi)^{d}}{\left[\text{det}\left(\Gamma_{1/n}(s,t)\right)\right]^{d/2}}\exp\left(-\frac{c_{3}}{2}\frac{\|x-y+f(s)-f(s)\|^{2}}{\text{det}\left(\Gamma_{1/n}(s,t)\right)}\right)\,\mu(ds,dx)\,\mu(dt,dy),
		\end{aligned}
		\]
		where the last inequality follows from Lemma $\ref{l5}$. We denote by $I_{n}\left( (s,x),(t,y)\right) $ the last integrand. Since $\|x-y+f(t)-f(s)\|\geq\vert\|x-y\|-\|f(t)-f(s)\|\vert$, we
		have that
		\[
		\begin{array}{ll}
		I_{n}\left( (s,x),(t,y)\right) \leq & \dfrac{(2\pi)^{d}}{\left[\text{det}\left(\Gamma_{1/n}(s,t)\right)\right]^{d/2}}\,\exp\left(-\dfrac{c_{3}}{2}\dfrac{\|x-y\|^{2}}{\text{det}\left(\Gamma_{1/n}(s,t)\right)}\right)\\
		\\
		& \times \exp\left(c_{3}\dfrac{\|x-y\|\|f(s)-f(t)\|}{\text{det}\left(\Gamma_{1/n}(s,t)\right)}\right)\exp\left(-\dfrac{c_{3}}{2}\dfrac{\|f(s)-f(t)\|^{2}}{\text{det}\left(\Gamma_{1/n}(s,t)\right)}\right).
		\end{array}
		\]
		Using the strong local nondeterminism property \eqref{SLND FBM} of $B_{0}^{H}$, there exists a constant $c_{10}$  such that 
		\begin{align}
		\det\Gamma_{1/n}(s,t)& \geq\det\text{Cov}\left(B_{0}^{H}(s),B_{0}^{H}(t)\right)=\text{Var}\left(B_{0}^{H}(s)\right)\,\text{Var}\left(B_{0}^{H}(t)|B_{0}^{H}(s)\right)\nonumber
		\\ &\geq c_{10}\,|t-s|^{2H}.\label{det-determinism}
		\end{align}
		Since  $f$ is $H$-Hölder continuous we have $\dfrac{\|f(t)-f(s)\|}{\sqrt{\text{det}\left(\Gamma_{1/n}(s,t)\right)}}\leq\dfrac{K}{\sqrt{c_{10}}}$ for all $s\neq t \in E$. It follows that 
		\[
		I_{n}\left( (s,x),(t,y)\right)  \leq\frac{(2\pi)^{d}}{\left[\text{det}\left(\Gamma_{1/n}(s,t)\right)\right]^{d/2}}\exp\left(-\frac{c_{3}}{2}\frac{\|x-y\|^{2}}{\text{det}\left(\Gamma_{1/n}(s,t)\right)}\right)\exp\left(\dfrac{c_{3}\,K}{\sqrt{c_{10}}}\dfrac{\|x-y\|}{\sqrt{\text{det}\left(\Gamma_{1/n}(s,t)\right)}}\right),
		\]
		which implies that 
		\[
		I_{n}\left( (s,x),(t,y)\right) \leq\frac{c_{11}\,(2\pi)^{d}}{\left[\text{det}\left(\Gamma_{1/n}(s,t)\right)\right]^{d/2}}\exp\left(-\frac{c_{3}}{4}\frac{\|x-y\|^{2}}{\text{det}\left(\Gamma_{1/n}(s,t)\right)}\right)
		\]
		where $c_{11}$ is a positive constant such that $\underset{x\geq0}{\sup} \, \exp \left( -c_{3}\dfrac{x^{2}}{4}+\dfrac{\,c_{3}\,K}{\sqrt{c_{10}}}\,x \right) \leq c_{11}$.

		\noindent It is now straightforward to deduce that there exists a positive constant  $c_{9}$ such that
		
		\begin{equation}
		I_{n}\left( (s,x),(t,y)\right) \leq \frac{c_{9}}{\rho_H\left( (s,x),(t,y)\right)^d}=\frac{c_{9}}{\max\{|s-t|^{Hd},\|x-y\|^d\}}.\label{In-estim}
		\end{equation} 
		Indeed, if $\det \Gamma_{1/n}(s,t)\geq \|x-y\|^{2}$ we have 
		\begin{align}
		\begin{aligned}
		I_{n}((s,x),(t,y))\leq \frac{(2\pi)^{d}}{(\det \Gamma_{1/n}(s,t))^{d/2}}\leq \frac{c_{9}}{|t-s|^{Hd}},\label{1st estimation}
		\end{aligned}
		\end{align}
		where we use \ref{det-determinism} for the last the inequality. 
		Otherwise, if $\det \Gamma_{1/n}(s,t)< \|x-y\|^{2}$ the elementary inequality
		$\underset{x>0}{\sup}\,x^{d/2}\,e^{-c_{3}x/4}\leq c_{12}$ enables us to obtain 
		\begin{equation}
		I_{n}((s,x),(t,y))\leq\frac{c_{9}}{\|x-y\|^{d}}.\label{2nd estimation}
		\end{equation}
		Combining \eqref{1st estimation} and \eqref{2nd estimation} leads to \eqref{In-estim}.
		Hence the second inequalities in \eqref{esp energy} follows immediately.
		
		Plugging the moment estimates of \eqref{esp energy} into the Paley–Zygmund inequality (c.f. Kahane \cite{Kahane}, p.8), allows us to confirm that $\{\mu_n, n\geq 1\}$ has a subsequence that converges weakly to a finite measure $\tilde{\mu}$  supported on the set $\{(s,x)\in E \times F : B^H(s)+f(s)=x\}$, positive with positive probability and also satisfying the moment estimates of \eqref{esp energy}. Consequently,      
		
		$$\mathbb{P}\{(B^H+f)(E) \cap F \neq \varnothing\} \geqslant \mathbb{P}\{\|\tilde{\mu}\|>0\} \geqslant \frac{[\mathbb{E}(\|\tilde{\mu}\|)]^{2}}{\mathbb{E}\left[\|\tilde{\mu}\|^{2}\right]} \geqslant c_{13} \mathcal{C}_{\rho_H, d}(E \times F),$$
		where $c_{13}=\dfrac{c_{8}^2}{c_{9}}$. All that remains to be done is take $c_{1}=c_{7}\vee 1/c_{13}$. Thus the lower and upper bounds in \eqref{upper-lower bounds 1} will follow immediately which completes the proof. 
	\end{proof}
	
	Recall that in the precise case where $E$ is an interval, Corollary 2.2. in \cite{Chen Xiao 2012} ensures that there exists a finite constant  $ c\geq 1$ depending only on $E$, $F$ and $H$ such that 
	$$c^{-1}\,\mathcal{C}_{d-1/H}(F)\leq \mathbb{P}\left\{ B^H(E)\cap F\neq \emptyset \right\}\leq c \,\mathcal{H}^{d-1/H}(F),$$ for any Borel set $F\subseteq \mathbb{R}^d$. 
	Our next goal is to establish such estimates for $(B^H+f)$ and any Borel set $E$ by means of its Hausdorff measure. However, to achieve our stated goal, we need to make use of the Minkowski dimension as well.
		We introduce now the Minkowski dimension of $E\subset [0,1]$. Let $N(E,r)$ be the smallest number of open intervals of length $r$ required to cover $E$.  The lower and upper Minkowski dimensions of $E$ are respectively defined as 
	
	\begin{equation*}
	\begin{aligned}
	&\underline{\dim}_M(E):= \liminf_{r\rightarrow 0^+}\frac{\log N(E,r)}{\log(1/r)},\\
	&\overline{\dim}_M(E):= \limsup_{r\rightarrow 0^+}\frac{\log N(E,r)}{\log(1/r)}.
	\end{aligned}       
	\end{equation*}
	Equivalently, the upper Minkowski dimension of $E$ can be written as 
	\begin{equation}
	\overline{\dim}_M(E)=\inf\{\gamma : \exists C<\infty\, \text{ such that } N(E,r)\leqslant Cr^{-\gamma} \text{ for all } r>0 \}.\label{upper Minkowski}
	\end{equation}

	\begin{proposition}\label{corollary hitting}
		Let $B^H$, $f$ and $F$ as in Theorem \ref{main theorem hitting}. Let $E$ be a subset of $I$  such that $\dim(E)>0$. Then for any  $0<\beta_1<\dim(E)\leq \overline{\dim}_M(E)<\beta_2<Hd$, we have 
		\begin{equation}
		\mathbf{\mathfrak{c}}_{1}\,\mathcal{C}_{d-\beta_1/H}(F)\leq \mathbb{P}\{(B^H+f)(E)\cap F\neq \emptyset \}\leq \mathbf{\mathfrak{c}}_2\,\mathcal{H}^{d-\beta_2/H}(F) ,\label{lower bound 2}
		\end{equation} 
		where $\mathbf{\mathfrak{c}}_{1}$ and $\mathbf{\mathfrak{c}}_{2}$ are two positive constants which depend on $E$, $F$, $H$, $K$, $\beta_1$ and $\beta_2$. 
	\end{proposition}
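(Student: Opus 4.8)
The plan is to deduce Proposition~\ref{corollary hitting} from Theorem~\ref{main theorem hitting} by comparing the parabolic quantities $\mathcal{C}_{\rho_H,d}(E\times F)$ and $\mathcal{H}^{d}_{\rho_H}(E\times F)$ appearing in \eqref{upper-lower bounds 1} with the Euclidean quantities $\mathcal{C}_{d-\beta_1/H}(F)$ and $\mathcal{H}^{d-\beta_2/H}(F)$ built from $F$ alone. Concretely, I would establish the two geometric estimates
\[
\mathcal{C}_{\rho_H,d}(E\times F)\ \geq\ \mathfrak{a}\,\mathcal{C}_{d-\beta_1/H}(F),\qquad
\mathcal{H}^{d}_{\rho_H}(E\times F)\ \leq\ \mathfrak{b}\,\mathcal{H}^{d-\beta_2/H}(F),
\]
where $\mathfrak{a}$ depends only on $E,H,\beta_1$ and $\mathfrak{b}$ only on $E,H,\beta_2$; inserting these into \eqref{upper-lower bounds 1} and setting $\mathfrak{c}_1=\mathfrak{a}/c_1$, $\mathfrak{c}_2=c_1\mathfrak{b}$ gives \eqref{lower bound 2}. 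Note first that the hypothesis $\beta_1<\dim(E)\leq\overline{\dim}_M(E)<\beta_2<Hd$ forces $d-\beta_1/H>0$ and $d-\beta_2/H>0$, so all the kernels occurring are honest Riesz kernels and $\mathcal{H}^{d-\beta_2/H}$ an honest Hausdorff measure.

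For the capacity bound I may assume $\mathcal{C}_{d-\beta_1/H}(F)>0$. Since $\beta_1<\dim(E)$, Frostman's lemma yields a Borel probability measure $\lambda$ carried by $E$ with finite $\beta_1$-energy $\kappa:=\int\!\!\int|s-t|^{-\beta_1}\,\lambda(ds)\lambda(dt)<\infty$; and by definition of $\mathcal{C}_{d-\beta_1/H}$ there is a probability measure $\sigma$ on $F$ with $\int\!\!\int\|x-y\|^{-(d-\beta_1/H)}\sigma(dx)\sigma(dy)\leq 2/\mathcal{C}_{d-\beta_1/H}(F)$. Then $\mu:=\lambda\otimes\sigma\in\mathcal{P}(E\times F)$, and the crucial point is the elementary bound $\max\{A,B\}\geq A^{\theta}B^{1-\theta}$ for $A,B\geq 0$ and $\theta\in[0,1]$: choosing $\theta=\beta_1/(Hd)\in(0,1)$ we obtain
\[
\rho_H((s,x),(t,y))^{d}=\max\{|s-t|^{Hd},\|x-y\|^{d}\}\ \geq\ |s-t|^{\beta_1}\,\|x-y\|^{\,d-\beta_1/H},
\]
so that, by Tonelli, $\mathcal{E}_{\rho_H,d}(\mu)\leq\kappa\cdot\bigl(2/\mathcal{C}_{d-\beta_1/H}(F)\bigr)$ and hence $\mathcal{C}_{\rho_H,d}(E\times F)\geq\mathcal{E}_{\rho_H,d}(\mu)^{-1}\geq\mathcal{C}_{d-\beta_1/H}(F)/(2\kappa)$, i.e.\ $\mathfrak{a}=1/(2\kappa)$.

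For the Hausdorff bound I may assume $\mathcal{H}^{d-\beta_2/H}(F)<\infty$. Since $\beta_2>\overline{\dim}_M(E)$, \eqref{upper Minkowski} gives a finite $C_E$ with $N(E,r)\leq C_E\,r^{-\beta_2}$ for all $r>0$. Fix $\delta>0$ and choose a Euclidean cover $F\subseteq\bigcup_{j}B(y_j,\rho_j)$ with $\rho_j<\delta$ and $\sum_j(2\rho_j)^{d-\beta_2/H}\leq\mathcal{H}^{d-\beta_2/H}(F)+\varepsilon$; for each $j$ cover $E$ by $N(E,\rho_j^{1/H})$ intervals of length $\rho_j^{1/H}$ with midpoints $t_{j,k}$. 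Since $\rho_H((s,x),(t_{j,k},y_j))<\rho_j$ whenever $|s-t_{j,k}|<\rho_j^{1/H}$ and $\|x-y_j\|<\rho_j$, the balls $B_{\rho_H}((t_{j,k},y_j),\rho_j)$ cover $E\times F$, and
\[
\sum_{j,k}(2\rho_j)^{d}=\sum_{j}N(E,\rho_j^{1/H})\,(2\rho_j)^{d}\leq C_E\,2^{\beta_2/H}\sum_{j}(2\rho_j)^{d-\beta_2/H}\leq C_E\,2^{\beta_2/H}\bigl(\mathcal{H}^{d-\beta_2/H}(F)+\varepsilon\bigr).
\]
Letting $\delta\to0$ and $\varepsilon\to0$ yields $\mathcal{H}^{d}_{\rho_H}(E\times F)\leq C_E\,2^{\beta_2/H}\,\mathcal{H}^{d-\beta_2/H}(F)$, i.e.\ $\mathfrak{b}=C_E\,2^{\beta_2/H}$.

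The only genuinely delicate point is the capacity step: the parabolic $d$-energy of a product measure does not factorize, so one interpolates through the inequality $\max\{A,B\}\geq A^{\theta}B^{1-\theta}$ with the exponent tuned to $\theta=\beta_1/(Hd)$, which makes the two resulting marginal energies exactly a $\beta_1$-energy on $E$ (finite precisely because $\dim(E)>\beta_1$) and a $(d-\beta_1/H)$-energy on $F$; the strict inequality $\beta_2<Hd$ is what keeps the target exponent positive on the Hausdorff side. Everything else --- the covering estimate and the bookkeeping of constants before substituting into Theorem~\ref{main theorem hitting} --- is routine.
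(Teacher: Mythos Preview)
Your proof is correct. The Hausdorff-measure comparison is essentially the paper's own argument (same product covering, same use of \eqref{upper Minkowski}, only a cosmetic difference in whether one covers $E$ by intervals of length $\rho_j^{1/H}$ or $2\rho_j^{1/H}$).

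The capacity comparison, however, follows a genuinely different and cleaner route. The paper takes the Frostman \emph{mass-distribution} form $\nu([a,a+\delta])\le C\delta^{\beta_1}$ and proves a separate technical lemma (Lemma~\ref{estim Frostman lemma}) via dyadic annuli to show that $\sup_t\int \nu(ds)/\max\{r^d,|s-t|^{Hd}\}\le C\,\varphi_{d-\beta_1/H}(r)$, then integrates in $r=\|x-y\|$ against a near-optimal measure on $F$. You instead take the \emph{energy} form of Frostman's lemma ($\beta_1$-energy of $\lambda$ finite) and use the one-line interpolation $\max\{A,B\}\ge A^{\theta}B^{1-\theta}$ with $\theta=\beta_1/(Hd)$ to factorize the parabolic $d$-energy of $\lambda\otimes\sigma$ as a product of the $\beta_1$-energy on $E$ and the $(d-\beta_1/H)$-energy on $F$. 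This bypasses Lemma~\ref{estim Frostman lemma} entirely and is shorter. What the paper's approach buys is a slightly sharper kernel estimate that also handles the borderline case $\beta=Hd$ (needed later for Proposition~\ref{corollary hitting 2}); your interpolation trick is tailored to the strict inequality $\beta_1<Hd$ assumed here and would not survive to that endpoint, but within the present statement it is perfectly adequate.
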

	
	We need the following lemma to establish the lower bound in \eqref{lower bound 2}.
	
	
	\begin{lemma}\label{estim Frostman lemma}
		Let $H\in (0,1)$, $d\geq 1$ and $0<\beta \leq Hd$. Let $\nu$  a Borel probability measure on $[0,1]$ such that, for all $ a\in [0,1]$ and $\delta>0$
		\begin{align}
		\nu([a,a+\delta])\leq \mathbf{\mathfrak{c}}_{3} \,\delta^{\beta},\label{Frostman}
		\end{align}
	 where $\mathbf{\mathfrak{c}}_{3}$ is a positive constant which depends on $\beta$ only. Then, for all $r> 0$, we have 
		\begin{equation}
		\sup_{t\in [0,1]}\int_{[0,1]}\frac{\nu(ds)}{\max\{r^d,\vert s-t \vert^{Hd}\}}\leq \mathbf{\mathfrak{c}}_{4} \varphi_{d-\beta/H}(r),\label{estim Frostman}
		\end{equation}
		where $\varphi_{d-\beta/H}(.)$ is the function defined in \eqref{radial kernel} and $\mathbf{\mathfrak{c}}_{4}$ is a positive constant depending only on $\beta$, $H$ and $d$. \\
	\end{lemma}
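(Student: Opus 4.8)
The plan is to split the integral at the scale $r^{1/H}$, according to whether the maximum in the denominator is realised by $r^{d}$ or by $|s-t|^{Hd}$, and to bound the two resulting pieces by constants that are uniform in $t\in[0,1]$ and independent of $\nu$. For the \emph{near} piece, over $\{s\in[0,1]:|s-t|\le r^{1/H}\}$, the integrand is at most $r^{-d}$; enclosing this set in an interval $[a,a+2r^{1/H}]$ with $a\in[0,1]$ and applying the Frostman-type bound \eqref{Frostman}, its $\nu$-mass is at most $\mathbf{\mathfrak{c}}_{3}(2r^{1/H})^{\beta}$, so this piece contributes at most $2^{\beta}\mathbf{\mathfrak{c}}_{3}\,r^{\beta/H-d}$.

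For the \emph{far} piece, over $\{s\in[0,1]:|s-t|>r^{1/H}\}$, the integrand equals $|s-t|^{-Hd}$, and I would decompose the region into the dyadic annuli $A_{k}:=\{s:2^{k}r^{1/H}<|s-t|\le 2^{k+1}r^{1/H}\}$ with $k\ge 0$. Since $|s-t|\le 1$ on $[0,1]^{2}$, only the indices $k\le K$ with $2^{k}r^{1/H}\le 1$ contribute, where $K\asymp\frac1H\log(1/r)$ (and for $r\ge 1$ the far region is empty). On $A_{k}$ the integrand is at most $(2^{k}r^{1/H})^{-Hd}$, while \eqref{Frostman} gives $\nu(A_{k})\le\mathbf{\mathfrak{c}}_{3}(2^{k+2}r^{1/H})^{\beta}$; summing over $k$ bounds the far piece by $\mathbf{\mathfrak{c}}_{3}\,2^{2\beta}\,r^{(\beta-Hd)/H}\sum_{k=0}^{K}2^{k(\beta-Hd)}$.

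It then remains to treat the two admissible regimes of the hypothesis $0<\beta\le Hd$. If $\beta<Hd$, the exponent $\beta-Hd$ is negative, the geometric series is bounded by a constant depending only on $\beta,H,d$, and the far piece is $\lesssim r^{\beta/H-d}$; combined with the near piece this gives $\mathbf{\mathfrak{c}}_{4}\,r^{-(d-\beta/H)}=\mathbf{\mathfrak{c}}_{4}\,\varphi_{d-\beta/H}(r)$ for $r<1$, whereas for $r\ge 1$ the whole integral is at most $r^{-d}=r^{-\beta/H}\,r^{-(d-\beta/H)}\le r^{-(d-\beta/H)}=\varphi_{d-\beta/H}(r)$. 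If $\beta=Hd$, then $r^{(\beta-Hd)/H}=1$ and the sum equals $K+1\asymp 1+\frac1H\log(1/r)$, so the far piece is $\lesssim 1+\log(1/r)\asymp\log(e/r)=\varphi_{0}(r)$ for $r<1$ (the near piece being the constant $2^{\beta}\mathbf{\mathfrak{c}}_{3}$, which is $\le 2^{\beta}\mathbf{\mathfrak{c}}_{3}\,\varphi_{0}(r)$ since $\varphi_{0}\ge 1$), while for $r\ge 1$ the integral is $\le r^{-d}\le 1=\varphi_{0}(r)$.

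The estimates are automatically uniform in $t$ and independent of $\nu$ because \eqref{Frostman} is uniform in $a$ and the kernel depends on $(s,t)$ only through $|s-t|$; the only point needing care is the bookkeeping of the multiplicative constants across the transition $r\approx 1$, which is exactly what the case splits above are designed to absorb. An equivalent route that sidesteps the dyadic sum is the layer-cake identity $\int_{[0,1]}\frac{\nu(ds)}{\max\{r^{d},|s-t|^{Hd}\}}=\int_{0}^{\infty}\nu\{s:\max\{r^{d},|s-t|^{Hd}\}<1/\lambda\}\,d\lambda$, where the level set is empty for $\lambda\ge r^{-d}$ and has $\nu$-mass at most $\min\{1,\mathbf{\mathfrak{c}}_{3}2^{\beta}\lambda^{-\beta/(Hd)}\}$ by \eqref{Frostman}; integrating this elementary function of $\lambda$ over $(0,r^{-d})$ and using $\beta/(Hd)\le 1$ reproduces the same two regimes directly.
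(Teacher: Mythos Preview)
Your proof is correct and follows essentially the same approach as the paper: the same split at the scale $r^{1/H}$ into near and far pieces, a dyadic annular decomposition of the far piece combined with the Frostman bound \eqref{Frostman}, and the same case split between $\beta<Hd$ (convergent geometric series) and $\beta=Hd$ (counting the $\asymp H^{-1}\log(1/r)$ annuli), with $r\ge 1$ handled by the trivial bound $r^{-d}$. The only cosmetic difference is that the paper indexes its dyadic annuli inward from $1$ (using $[2^{-k},2^{-k+1})$) rather than outward from $r^{1/H}$ as you do, and your closing layer-cake alternative is an extra route the paper does not mention.
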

	
	\begin{proof}
		First, it is worthwhile pointing out that for $r\geq 1$	 we have 
		\begin{equation}
		\sup_{t\in [0,1]}\int_{[0,1]}\frac{\nu(ds)}{\max\{r^d,\vert s-t \vert^{Hd}\}}\leq r^{-d}\leq \varphi_{d-\beta/H}(r).\label{est-I1}
		\end{equation}
		Now we assume that $0<\beta < Hd$. For $r\in (0,1)$, we divide the integral in \eqref{estim Frostman} into two parts $I_1$, $I_2$ as follows
		$$ I_1=\int_{\vert s-t \vert<r^{1/H}}\frac{\nu(ds)}{r^d} \text{ \,and \,} I_2= \int_{\vert s-t \vert\geq r^{1/H}}\frac{\nu(ds)}{\vert s-t \vert^{Hd}}.$$
		By using $\eqref{Frostman}$ we obtain 
		\begin{equation}
		I_1\leq \mathbf{\mathfrak{c}}_{3}\, 2^\beta \, \varphi_{d-\beta/H}(r).\label{estim I_1}
		\end{equation} 
		Let us set  $k(r):=\min\{k\geq 0: 2^{-k}\leq r^{1/H}\}$. Then it is easy to see that 
		\begin{align}
		[r^{1/H},1)\subset \bigcup_{k=1}^{k(r)}[2^{-k},2^{-k+1}) .\label{estim k(r)}
		\end{align}
		A second use of \eqref{Frostman} gives 
		\begin{align}
		I_2&\leq \sum_{k=1}^{k(r)}2^{kH\,d}\nu(\{s\in [0,1]: 2^{-k}\leq \vert s-t \vert <2^{-k+1} \})\nonumber\\
		&\leq 2\,\mathbf{\mathfrak{c}}_{3} \sum_{k=1}^{k(r)}2^{k(Hd-\beta)} \leq 2\,\mathbf{\mathfrak{c}}_{3} \,\dfrac{2^{(Hd-\beta)}}{2^{(Hd-\beta)}-1}  r^{-(d-\beta/H)}=2\,\mathbf{\mathfrak{c}}_{3} \,\dfrac{2^{(Hd-\beta)}}{2^{(Hd-\beta)}-1} \,\varphi_{d-\beta/H}(r). \quad \quad \quad \label{estim I_2}
		\end{align}
		Finally, putting it all together enables us to deduce \eqref{estim Frostman}.
		
		For $\beta=H\,d$ the same techniques as above can give that 
		\begin{equation*}
		I_1\leq  2^\beta\, \mathfrak{c}_{3}\, \quad \text{ and }\quad I_2\leq 2\,\mathfrak{c}_{3} \,k(r).
		\end{equation*}
		It follows from the definition of $k(r)$ that $r^{1/H}<2^{-k(r)+1}$. Hence, we have $$	I_1\leq  2^\beta\, \mathfrak{c}_{3}\, \varphi_{0}(r)\, \quad \text{ and }\quad I_2\leq 2\,\mathfrak{c}_{3} (1\vee 1/H\log(2))\,\varphi_{0}(r),$$
	which ends the proof.
		
	\end{proof}
	
	
	\begin{proof}[Proof of Proposition \ref{corollary hitting}]
		Using Theorem \ref{main theorem hitting} it suffices to prove that that there exists a positive constant $\mathbf{\mathfrak{c}}_{6}$ such that
		\begin{align}
		\mathcal{C}_{d-\beta_1/H}(F)\leq \mathbf{\mathfrak{c}}_{5}\,\mathcal{C}_{\rho_H,d}(E\times F)\quad \text{ and }\quad  \mathcal{H}_{\rho_H}^d(E\times F)\leq\, \mathbf{\mathfrak{c}}_{6} \mathcal{H}^{d-\beta_2/H}(F).\label{estim capacity-measure}
		\end{align}
		Indeed for $\beta_1 \in (0,\dim(E))$, by Frostman's theorem there is a Borel probability measure $\nu$ supported on $E$ such that
		\begin{align}
		\nu([a,a+\delta])\leq \mathbf{\mathfrak{c}}_{7} \, \delta^{\beta_1},\label{Frostman 2}
		\end{align}
		for all $ a\in E$ and $\delta>0$, where $\mathbf{\mathfrak{c}}_{7}$ is a positive constant which depends on $\beta_1$ only. 
		Let us suppose that $\mathcal{C}_{d-\beta_1/H}(F)>0$, otherwise there is nothing to prove. It follows that for all $\gamma\in (0,\mathcal{C}_{d-\beta_1/H}(F))$ there is a probability measure $m$ supported on $F$ such that
		\begin{align}
		\mathcal{E}_{d-\beta_1/H}(m):=\int_F\int_F\frac{m(dx)m(dy)}{\|x-y\|^{d-\beta_1/H}}\leq \gamma^{-1}.\label{E,d}
		\end{align}
		Since $\nu\otimes m$ is a probability measure on $E\times F$, then applying   Fubini’s theorem and \eqref{estim Frostman} of Lemma \ref{estim Frostman lemma} we obtain 
		\begin{align}
		\mathcal{E}_{\rho_H,d}(\nu\otimes m)= \int_{E\times F} \int_{E\times F} \frac{\nu\otimes m(d u) \nu\otimes m(d v)}{(\rho_H(u,v))^d}\leq  \mathbf{\mathfrak{c}}_{4}\int_F\int_F\frac{m(dx)m(dy)}{\|x-y\|^{d-\beta_1/H}}\leq  \mathbf{\mathfrak{c}}_{4}\,\gamma^{-1}.\label{E,d,rho}
		\end{align}
		Consequently we have $\mathcal{C}_{\rho_H,\alpha}(E\times F)\geq  \mathbf{\mathfrak{c}}_{4}^{-1}\,\gamma$. Then we let  $\gamma\uparrow\mathcal{C}_{d-\beta_1/H}(F)$  to conclude that the first inequality in \eqref{estim capacity-measure} holds true.
		
		Now let us  prove the second inequality in \eqref{estim capacity-measure}. Let $l>\mathcal{H}^{d-\beta_2/H}(F)$ be arbitrary with $d-\beta_2/H>0$. Then there is a covering of $F$ by open balls $B(r_n)$ of radius $r_n$ such that 
		\begin{align}
		F \subset \bigcup_{n=1}^{\infty} B(r_{n}) \quad \text { and } \quad \sum_{n=1}^{\infty} (2r_{n})^{d-\beta_2/H} \leq l.\label{covering for F}
		\end{align}
		For all $n\geq 1$, let $E_{n,j}$, $j=1,...,N(E,2\,r_n^{1/H})$ be a family of open intervals of length $2\,r_n^{1/H}$ covering  $E$.  It follows that the family $E_{n,j}\times B(r_n),\,j=1,...,N(E,2\,r_n^{1/H})$, $n \geq  1$ gives a covering of $E\times F$ by open balls of radius $r_n$ for the parabolic metric $\rho_H$. 
		
		It follows from \eqref{upper Minkowski} that for all $\delta>0$  the number of open intervals of length $\delta$ needed to cover $E$ satisfies
		\begin{align}
		N(E,\delta)\leq \mathbf{\mathfrak{c}}_8\, \delta^{-\beta_2},\label{estim covering number}
		\end{align}
		where $\mathbf{\mathfrak{c}}_8$ is a positive and finite constant which depend on $E$ only. 
		Together with the estimates \eqref{covering for F} and \eqref{estim covering number} that have been established above, we have  
		\begin{align}
		\sum_{n=1}^{\infty} \sum_{j=1}^{N(E,2\, r_n^{1/H})}\left(2 r_{n}\right)^{d} \leq \mathbf{\mathfrak{c}}_{8}\,2^{-\beta_2(1-1/H)}\, \sum_{n=1}^{\infty} (2r_{n})^{d-\beta_2/H} \leq \mathbf{\mathfrak{c}}_{8}\,\,2^{-\beta_2(1-1/H)}\, l.\label{covering for E*F}
		\end{align}
		Then let $l\downarrow\mathcal{H}^{d-\beta_2/H}(F)$, the second inequality in \eqref{estim capacity-measure} follows with $\mathbf{\mathfrak{c}}_{6}=\mathbf{\mathfrak{c}}_{8}\,\,2^{-\beta_2(1-1/H)}$.
	\end{proof}
	
	It is well known that Hausdorff and Minkowski dimensions agree for many sets $E$. Often this is linked on the one hand to the geometric properties of the set, on the other hand it is a consequence of the existence of a sufficiently regular measure. Among the best known are Ahlfors-David regular sets defined as follows:
	\begin{itemize}
		\item[\textbf{(S)}]: Let $E\subset I$ and $\beta \in \left] 0,1\right]  $. We say that $E$ is $\beta$-regular if there exists a finite positive Borel measure $\nu$ supported on $E$ and positive constant $\mathbf{\mathfrak{c}}_{9}$ and  such that 
		\begin{equation} 
		\mathbf{\mathfrak{c}}_{9}^{-1}\,\delta^{\beta}\leq\nu([a-\delta,a+\delta])\leq\mathbf{\mathfrak{c}}_{9}\,\delta^{\beta}\,\,\text{ for all \ensuremath{a\in E}, \ensuremath{0<\delta\leq1}}.\label{condition S}
		\end{equation}
	\end{itemize}
	\begin{remark}
		1. If $E$ is the whole interval $I$ then $\beta$ in the condition \textbf{(S)} should be equal to $1$. This leads to the conclusion that the measure $\nu$ can be chosen as the normalized Lebesgue measure on $I$. In this case the above proposition  is simply Corollary 2.2 in \cite{Chen Xiao 2012}.
		
		2. The Cantor set $C(\lambda)$, $0<\lambda<1/2$, subset of $I$ with $\nu$ is the $\beta$-dimensional Hausdorff measure restricted to $C(\lambda)$ where $\beta=\dim C(\lambda)=\log(2)/\log(1/\lambda)$. For more details see Theorem 4.14 p.$67$  in Mattila \cite{Mattila}. In general, self similar subsets of $\mathbb{R}$ satisfying
		the open set condition are standard examples of regular sets, see \cite{Hutchinson}.
	\end{remark}

	According to Theorem 5.7 p.$80$ in \cite{Mattila}, for a set $E$ satisfying the condition \textbf{(S)} we have
	$$ \beta=\dim(E)=\underline{\dim}_M(E)=\overline{\dim}_M(E).$$
	In such case Proposition  \ref{corollary hitting} becomes 
	
	\begin{proposition}\label{corollary hitting 2}
		Let $B^H$, $f$ and $F$ as in Theorem \ref{main theorem hitting}. Let $E$ be a subset of $I$   satisfying  the condition \textbf{(S)}. Then there is a positive and finite constant $\mathbf{\mathfrak{c}}_{10}$ which depends on $E$, $F$, $H$, $K$ and $\beta$, such that 
		\begin{equation}
		\mathbf{\mathfrak{c}}_{10}^{-1}\, \mathcal{C}_{d-\beta/H}(F)\leq \mathbb{P}\{(B^H+f)(E)\cap F\neq \emptyset \}\leq 		\mathbf{\mathfrak{c}}_{10}\,\mathcal{H}^{d-\beta/H}(F).\label{upper-lower bound 2}
		\end{equation}
	\end{proposition}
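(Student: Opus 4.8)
The strategy is to rerun the comparison arguments from the proof of Proposition \ref{corollary hitting}, but now directly at the endpoint exponent $\beta=\dim(E)$, invoking Theorem \ref{main theorem hitting} in the end. One cannot simply let $\beta_{1}\uparrow\beta$ and $\beta_{2}\downarrow\beta$ in \eqref{lower bound 2}: the constants there depend on $\beta_{1},\beta_{2}$ through the Frostman constant produced by Frostman's theorem and through the covering constant $\mathbf{\mathfrak{c}}_{8}$ coming from $\overline{\dim}_{M}(E)$, and both of these generally blow up as the exponents approach $\dim(E)=\overline{\dim}_M(E)$. The point of condition \textbf{(S)} is precisely that it furnishes, at the single exponent $\beta$, a measure $\nu$ that is Frostman-regular from above and lower-regular, with a uniform constant $\mathbf{\mathfrak{c}}_{9}$; this is exactly what is needed to push the two comparisons to the endpoint.

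For the lower bound, let $\nu$ be the measure given by \textbf{(S)}. The upper inequality in \eqref{condition S} gives $\nu([a,a+\delta])\leq\nu([a-\delta,a+\delta])\leq\mathbf{\mathfrak{c}}_{9}\,\delta^{\beta}$ for all $a\in E$ and $0<\delta\leq1$, so, provided $\beta\leq Hd$, Lemma \ref{estim Frostman lemma} applies with this exact $\beta$ (the case $\beta=Hd$ being covered by the logarithmic branch treated at the end of that lemma). Taking a near-optimal measure $m$ on $F$ for $\mathcal{C}_{d-\beta/H}(F)$ and arguing verbatim as in the proof of Proposition \ref{corollary hitting}, the product $\nu\otimes m$ has parabolic $d$-energy bounded by $\mathbf{\mathfrak{c}}_{4}\,\mathcal{E}_{d-\beta/H}(m)$, whence $\mathcal{C}_{d-\beta/H}(F)\leq\mathbf{\mathfrak{c}}\,\mathcal{C}_{\rho_{H},d}(E\times F)$ for a constant depending only on $E,H,d,\beta$.

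For the upper bound, the lower inequality in \eqref{condition S} yields the two-sided covering estimate $N(E,\delta)\asymp\delta^{-\beta}$; in particular $N(E,\delta)\leq C\,\delta^{-\beta}$ with $C$ depending only on $E$. This is the standard argument: a maximal $\delta$-separated subset $\{x_{1},\dots,x_{N}\}$ of $E$ produces disjoint intervals $[x_{i}-\delta/2,x_{i}+\delta/2]$, each of $\nu$-mass at least $\mathbf{\mathfrak{c}}_{9}^{-1}(\delta/2)^{\beta}$, so $N\leq\mathbf{\mathfrak{c}}_{9}\,2^{\beta}\,\nu(E)\,\delta^{-\beta}$, and by maximality the intervals of length $2\delta$ centred at the $x_{i}$ cover $E$. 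Feeding this bound (with $\beta$ in place of $\beta_{2}$) into the covering computation leading to \eqref{covering for E*F} gives $\mathcal{H}_{\rho_{H}}^{d}(E\times F)\leq\mathbf{\mathfrak{c}}\,\mathcal{H}^{d-\beta/H}(F)$; when $d-\beta/H\leq0$ the right-hand side is interpreted as $1$ and the estimate is trivial since probabilities are at most $1$.

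Combining the two comparisons with the parabolic bounds of Theorem \ref{main theorem hitting} and absorbing all constants into a single $\mathbf{\mathfrak{c}}_{10}\geq1$ depending on $E,F,H,K,\beta$ yields \eqref{upper-lower bound 2}. The only genuinely new work beyond the proof of Proposition \ref{corollary hitting} is the extraction of the clean, $\delta$-uniform covering bound $N(E,\delta)\leq C\delta^{-\beta}$ from \textbf{(S)} and the bookkeeping at the endpoint exponent (in particular the $\beta=Hd$ and $d-\beta/H\leq0$ borderline cases); I expect that bookkeeping, rather than any substantial new idea, to be the main thing to watch.
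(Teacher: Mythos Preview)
Your proposal is correct and follows essentially the same route as the paper: both extract the Frostman upper bound at the endpoint exponent $\beta$ directly from the upper inequality in \textbf{(S)} (feeding Lemma~\ref{estim Frostman lemma}, including the $\beta=Hd$ case), derive the covering-number estimate $N(E,\delta)\leq C\,\delta^{-\beta}$ from the lower inequality in \textbf{(S)} via a packing/maximal-separation argument, and then rerun the two comparisons from the proof of Proposition~\ref{corollary hitting} before invoking Theorem~\ref{main theorem hitting}. The paper organizes the borderline bookkeeping by explicitly listing the three cases $\beta<Hd$, $\beta=Hd$, $\beta>Hd$, whereas you handle them inline; the only point you leave slightly implicit is the lower bound when $\beta>Hd$ (where $\mathcal{C}_{d-\beta/H}(F)=1$ by convention and one needs $\mathbb{P}\{(B^H+f)(E)\cap F\neq\emptyset\}>0$, which follows since $\nu\otimes\delta_x$ has finite $(\rho_H,d)$-energy for any $x\in F$), but the paper is equally terse on this case.
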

\begin{proof}
Three cases are to be discussed here: (i) $\beta <Hd$, (ii) $\beta =Hd$ and (iii) $\beta >Hd$. Let us point out first that for the lower bound, the interesting cases are (i) and (ii) while for the upper bound it is the case (i) which requires proof. Indeed we have from \eqref{radial kernel} that $\mathcal{C}_{\alpha }(\cdot)=1$ for $\alpha <0$ and $\mathcal{H}^{\alpha}(.)$ is assumed to be equal to $1$ whenever $\alpha\leq 0$. In this regard, a close reading of the proof of Proposition \ref{corollary hitting} is required. Thus, we can clearly see that it is based on two key estimates, namely \eqref{Frostman 2} and \eqref{estim covering number} for the lower and upper bound respectively. In what follows we will establish such estimates under the condition  \textbf{(S)}. The estimation \eqref{Frostman 2} with $\beta$ is now a part of the condition  \textbf{(S)}. In order to establish  \eqref{estim covering number}, we will show that for all $\delta>0$  
\begin{align}
N(E,\delta)\leq C \delta^{-\beta},\label{estim covering number 2}
\end{align}
where $C$ is a positive and finite constant which depend on $I$ only. Indeed, let $0<\delta\leq 1$ and $P(E,\delta)$ be the greatest number of disjoint intervals $I_j$ centred in $x_{j}\in E$ with length $\delta$ required to cover $E$. Condition \textbf{(S)} ensures that $$\mathbf{\mathfrak{c}}_{9}^{-1}\, P(E,\delta)\, (\delta/2)^{\beta}\leq \sum_{j=1}^{P(E,\delta)}\nu(I_j)=\nu (E)\leq 1.$$ 
Using the fact that
$$N(E,2\delta)\leq P(E,\delta),$$
we obtain the desired estimation \eqref{estim covering number}. The rest of the proof follows closely the lines of that of  Proposition \ref{corollary hitting} especially given that Lemma \ref{estim Frostman lemma} takes into account the case (ii).
\end{proof}
	Following the same pattern as above we get the following proposition, which can be considered also as a corollary of Theorem 2.1 in \cite{Chen Xiao 2012}, for the subset $E$ of $I$  satisfying  the condition \textbf{(S)}.
	\begin{proposition}\label{corollary hitting CX12}
		Let $B^H$ and $F$ as in Theorem \ref{main theorem hitting}. Let $E$ be a subset of $I$   satisfying  the condition \textbf{(S)}. Then there is a positive and finite constant $\mathbf{\mathfrak{c}}_{11}$ which depends on $E$, $F$, $H$, $K$ and $\beta$, such that 
		\begin{equation}\label{upper-lower boundCX12}
		\mathbf{\mathfrak{c}}_{11}^{-1}\, \mathcal{C}_{d-\beta/H}(F)\leq \mathbb{P}\{B^H(E)\cap F\neq \emptyset \}\leq 		\mathbf{\mathfrak{c}}_{11}\,\mathcal{H}^{d-\beta/H}(F).
		\end{equation}
	\end{proposition}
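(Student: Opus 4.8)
The plan is to mimic the proof of Proposition \ref{corollary hitting 2}, simply replacing the input Theorem \ref{main theorem hitting} by the Chen--Xiao estimate \eqref{Chen Xiao Estimate}. Since a $d$-dimensional fractional Brownian motion of Hurst index $H$ satisfies conditions $(C_1)$ and $(C_2)$ of \cite{Xiao 2009}, Theorem 2.1 of \cite{Chen Xiao 2012} applies with $N=1$ and $E\subseteq I=[\varepsilon_0,1]$, giving a finite constant $c\ge1$ (depending on $I$, $F$, $H$) with $c^{-1}\mathcal{C}_{\rho_H,d}(E\times F)\le \mathbb{P}\{B^H(E)\cap F\ne\emptyset\}\le c\,\mathcal{H}_{\rho_H}^d(E\times F)$. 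Hence it suffices to prove the two metric comparisons $\mathcal{C}_{d-\beta/H}(F)\le \mathbf{\mathfrak{c}}\,\mathcal{C}_{\rho_H,d}(E\times F)$ and $\mathcal{H}_{\rho_H}^d(E\times F)\le \mathbf{\mathfrak{c}}\,\mathcal{H}^{d-\beta/H}(F)$, exactly as in \eqref{estim capacity-measure} but now with the single exponent $\beta=\dim(E)$ in place of $\beta_1,\beta_2$, which is legitimate precisely because condition \textbf{(S)} forces $\dim(E)=\underline{\dim}_M(E)=\overline{\dim}_M(E)=\beta$. As in the proof of Proposition \ref{corollary hitting 2}, only $0<\beta<Hd$ needs work for the upper bound and $0<\beta\le Hd$ for the lower bound, since for $\beta>Hd$ both sides of \eqref{upper-lower boundCX12} are bounded below by $1$ by the conventions following \eqref{radial kernel}.

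For the lower bound I would take the measure $\nu$ supplied by condition \textbf{(S)}, which satisfies $\nu([a-\delta,a+\delta])\le \mathbf{\mathfrak{c}}_9\,\delta^\beta$; after normalisation this is hypothesis \eqref{Frostman} of Lemma \ref{estim Frostman lemma} with exponent $\beta\le Hd$. Assuming $\mathcal{C}_{d-\beta/H}(F)>0$, pick for $\gamma<\mathcal{C}_{d-\beta/H}(F)$ a probability measure $m$ on $F$ with $\mathcal{E}_{d-\beta/H}(m)\le\gamma^{-1}$ as in \eqref{E,d}; then $\nu\otimes m$ is a probability measure on $E\times F$, and Fubini together with \eqref{estim Frostman} gives $\mathcal{E}_{\rho_H,d}(\nu\otimes m)\le \mathbf{\mathfrak{c}}_4\,\mathcal{E}_{d-\beta/H}(m)\le \mathbf{\mathfrak{c}}_4\,\gamma^{-1}$, so letting $\gamma\uparrow\mathcal{C}_{d-\beta/H}(F)$ yields the first comparison. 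In the borderline case $\beta=Hd$ one uses the logarithmic ($\varphi_0$) branch of Lemma \ref{estim Frostman lemma}, which is exactly the $\alpha=0$ branch of the kernel in \eqref{radial kernel}, so the argument still goes through after absorbing the $\log$-factor into the constant.

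For the upper bound I would first note that, under condition \textbf{(S)}, the box-counting numbers of $E$ obey $N(E,\delta)\le C\delta^{-\beta}$ for all $\delta>0$ with $C$ depending only on $I$: writing $P(E,\delta)$ for the maximal number of disjoint intervals of length $\delta$ centred in $E$, one has $\mathbf{\mathfrak{c}}_9^{-1}P(E,\delta)(\delta/2)^\beta\le\sum_j\nu(I_j)=\nu(E)\le1$ and $N(E,2\delta)\le P(E,\delta)$; this is precisely \eqref{estim covering number} with the single exponent $\beta$. Then, given $l>\mathcal{H}^{d-\beta/H}(F)$ with $d-\beta/H>0$, fix a covering of $F$ by balls $B(r_n)$ with $\sum_n(2r_n)^{d-\beta/H}\le l$, cover $E$ by $N(E,2r_n^{1/H})$ intervals of length $2r_n^{1/H}$ to obtain a covering of $E\times F$ by $\rho_H$-balls of radius $r_n$, and conclude
$$\sum_n\sum_{j=1}^{N(E,2r_n^{1/H})}(2r_n)^d\le C\,2^{-\beta(1-1/H)}\sum_n(2r_n)^{d-\beta/H}\le C\,2^{-\beta(1-1/H)}\,l;$$
letting $l\downarrow\mathcal{H}^{d-\beta/H}(F)$ gives the second comparison. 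Combining the two comparisons with the Chen--Xiao estimate and taking $\mathbf{\mathfrak{c}}_{11}$ to be the maximum of the resulting constants and $1$ completes the proof. I expect no genuine obstacle here: the only points requiring care are the bookkeeping at the borderline exponent $\beta=Hd$ (already handled by Lemma \ref{estim Frostman lemma}) and the verification that fractional Brownian motion satisfies conditions $(C_1)$–$(C_2)$ so that Theorem 2.1 of \cite{Chen Xiao 2012} is applicable — both of which are available in the cited literature.
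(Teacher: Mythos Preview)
Your proposal is correct and follows exactly the approach the paper intends: it says ``Following the same pattern as above we get the following proposition, which can be considered also as a corollary of Theorem 2.1 in \cite{Chen Xiao 2012},'' i.e.\ one replaces Theorem \ref{main theorem hitting} by the Chen--Xiao estimate \eqref{Chen Xiao Estimate} and then repeats verbatim the proof of Proposition \ref{corollary hitting 2}, using condition \textbf{(S)} to supply both the Frostman-type upper density \eqref{Frostman} and the covering-number bound \eqref{estim covering number 2}. The only cosmetic remark is that in the borderline case $\beta=Hd$ there is no ``log-factor to absorb'': the $\varphi_0$ output of Lemma \ref{estim Frostman lemma} \emph{is} the $\alpha=0$ kernel defining $\mathcal{C}_{0}(F)$, so the energy comparison $\mathcal{E}_{\rho_H,d}(\nu\otimes m)\le \mathfrak{c}_4\,\mathcal{E}_{0}(m)$ is direct.
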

We would like to point out that, when the drift $f$ is $H$-Hölder continuous and $E$ satisfies the condition \textbf{(S)}, Propositions \ref{corollary hitting 2} and \ref{corollary hitting CX12} assert that the hitting probabilities of $(B^H+f)$ behave like the ones of  $B^H$ in the following sense 
	
	$\bullet$ if $\dim(F)<d-\beta/H$  then  $\mathbb{P}\{(B^H+f)(E)\cap F\neq \emptyset \}=\mathbb{P}\{B^H(E)\cap F\neq \emptyset \}=0$,
	
	$\bullet$ if $\dim(F)>d-\beta/H$  then  $\mathbb{P}\{(B^H+f)(E)\cap F\neq \emptyset \}>0$ and  $\mathbb{P}\{B^H(E)\cap F\neq \emptyset \}>0$.
	
	This brings us to the following question: when $\dim(F)<d-\beta/H$, is it possible to get a function, with smaller Hölder order than H, for which $\mathbb{P}\{(B^H+f)(E)\cap F\neq \emptyset \}>0$? 
	In order to address this question, we need to consider $\alpha<H$ such that $d-\beta/\alpha<\dim(F)$ and $(\Omega^{\prime},\mathcal{F}^{\prime},\mathbb{P}^{\prime})$ another probability space on which we define a fractional Brownian motion $B^{\alpha}$ with Hurst parameter $\alpha$. We will work with the mixed process $Z^{H,\alpha}$ defined on the probability space $(\Omega\times\Omega^{\prime},\mathcal{F}\times\mathcal{F}^{\prime},\mathbb{P}\otimes\mathbb{P}^{\prime})$ by 
	\begin{align}
	Z^{H,\alpha}(t,(\omega,\omega^{\prime}))=B^H(t,\omega)+B^{\alpha}(t,\omega^{\prime}) \text{ for all  } t\geq 0 \text{ and } (\omega,\omega^{\prime})\in \Omega\times\Omega^{\prime}.
	\end{align}
	It is easy to see that $Z^{H,\alpha}=(Z^{H,\alpha}_1,...,Z^{H,\alpha}_d)$ where $Z^{H,\alpha}_i$ are independent copies of a real valued Gaussian process $Z^{H,\alpha}_0$ on $(\Omega\times\Omega^{\prime},\mathcal{F}\times\mathcal{F}^{\prime},\mathbb{P}\otimes\mathbb{P}^{\prime})$ with stationary increment and the covariance function given by $$\widetilde{\mathbb{E}}(Z^{H,\alpha}_0(s)Z^{H,\alpha}_0(t))=\frac{1}{2}(s^{2H}+t^{2H}-|t-s|^{2H})+\frac{1}{2}(s^{2\alpha}+t^{2\alpha}-|t-s|^{2\alpha}),$$
	where $\widetilde{\mathbb{E}}$ denote the expectation under the probability $\mathbb{P}\otimes \mathbb{P}^{\prime}$. The following lemma is about the strong local nondeterminism property of the process $Z$.
	Let $I\subset (0,1]$, be a closed interval, then we have 
	
	\begin{lemma}
		The real-valued process $\{Z^{H,\alpha}_0(t): t\geq 0\}$ satisfy the following  
		
		1. For all $s,t \in [0,1]$,
		\begin{align}
		|s-t|^{2\alpha}\leq \widetilde{\mathbb{E}}\left(Z^{H,\alpha}_0(t)-Z^{H,\alpha}_0(s)\right)^{2}\leq 2|s-t|^{2\alpha}.\label{slnd 1}
		\end{align}
		
		2. There exists a positive constant $C$ depending on $\alpha$, $H$ and $I$ only, such that 
		\begin{equation}
		\operatorname{Var}\left(Z^{H,\alpha}_{0}(u) \mid Z^{H,\alpha}_{0}\left(t_{1}\right), \ldots, Z^{H,\alpha}_{0}\left(t_{n}\right)\right) \geq C\left[\min _{0 \leq k \leq n}\left|u-t_{k}\right|^{2\alpha}+\min _{0 \leq k \leq n}\left|u-t_{k}\right|^{2 H}\right],
		\end{equation}
		for all integers $n\geq 1$, all $u,t_1,...,t_n \in I$ and $t_0=0$.
		
		3. There exists a positive constant $C$ depending on $\alpha$, $H$ and $I$ only.
		Such that for any $t\in I$ and any $0<r\leq t$,
		\begin{equation}
		\operatorname{Var}\left(Z^{H,\alpha}_{0}(t) \mid \, Z^{H,\alpha}_{0}(s); \mid s-t \mid \geq r\right) \geq C r^{2 \alpha}
		\end{equation}
		
	\end{lemma}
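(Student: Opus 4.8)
The plan is to exploit the independence of the two building blocks: viewed on the product space, $B^H_0$ (carried by $\Omega$) and $B^\alpha_0$ (carried by $\Omega'$) are independent centred Gaussian processes, so that $Z^{H,\alpha}_0=B^H_0+B^\alpha_0$ splits into two orthogonal pieces, each of which is a genuine fractional Brownian motion. The strong local nondeterminism estimate \eqref{SLND FBM}, which holds for every Hurst index, can then be applied to each piece separately, and the two bounds added.

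For the first assertion I would simply compute: by independence $\widetilde{\mathbb{E}}(Z^{H,\alpha}_0(t)-Z^{H,\alpha}_0(s))^2=\mathbb{E}(B^H_0(t)-B^H_0(s))^2+\mathbb{E}'(B^\alpha_0(t)-B^\alpha_0(s))^2=|t-s|^{2H}+|t-s|^{2\alpha}$, and since $|t-s|\leq 1$ and $\alpha<H$ one has $0\leq|t-s|^{2H}\leq|t-s|^{2\alpha}$, which yields the two-sided bound at once.

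For the second and third assertions the main device is a variance-splitting inequality. For a centred Gaussian family a conditional variance is deterministic and equals the squared $L^2(\widetilde{\mathbb{P}})$-distance from the conditioned variable to the closed linear span of the conditioning variables. Since $B^H_0\perp B^\alpha_0$, the span $L_A:=\overline{\operatorname{span}}\{Z^{H,\alpha}_0(s):s\in A\}$ is contained in the orthogonal direct sum $\mathcal{L}^H_A\oplus\mathcal{L}^\alpha_A$, where $\mathcal{L}^H_A:=\overline{\operatorname{span}}\{B^H_0(s):s\in A\}$ and $\mathcal{L}^\alpha_A:=\overline{\operatorname{span}}\{B^\alpha_0(s):s\in A\}$; indeed $\|\xi+\eta\|^2=\|\xi\|^2+\|\eta\|^2$ forces any Cauchy sequence in $\operatorname{span}\{Z^{H,\alpha}_0(s):s\in A\}$ to split into Cauchy sequences in the two pieces. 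Projecting $Z^{H,\alpha}_0(u)=B^H_0(u)+B^\alpha_0(u)$ onto the larger space (where the cross terms vanish by independence) then gives, for any index set $A$,
\[
\operatorname{Var}(Z^{H,\alpha}_0(u)\mid Z^{H,\alpha}_0(s),\,s\in A)\ \geq\ \operatorname{Var}(B^H_0(u)\mid B^H_0(s),\,s\in A)+\operatorname{Var}(B^\alpha_0(u)\mid B^\alpha_0(s),\,s\in A).
\]
For part 2 I take $A=\{t_1,\dots,t_n\}$ and bound each summand from below by \eqref{SLND FBM} (with $t_0=0$, using $B_0(0)=0$), obtaining $C_H\min_{0\leq k\leq n}|u-t_k|^{2H}+C_\alpha\min_{0\leq k\leq n}|u-t_k|^{2\alpha}$, which is at least $C\big[\min_k|u-t_k|^{2\alpha}+\min_k|u-t_k|^{2H}\big]$ with $C=C_H\wedge C_\alpha$. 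For part 3 I take $A=\{s\in I:|s-t|\geq r\}$, discard the nonnegative $B^H_0$-term, express the $B^\alpha_0$-conditional variance as the infimum over finite subsets of $A$, append the point $s_0=0$ (harmless since $B^\alpha_0(0)=0$), apply \eqref{SLND FBM} to $B^\alpha_0$, and note that $|t-s_j|\geq r$ for $j\geq 1$ while $|t-s_0|=t\geq r$ by the standing hypothesis $0<r\leq t$; hence the lower bound $C_\alpha r^{2\alpha}$.

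The delicate point, and the step I expect to need the most care, is the variance-splitting inequality in the infinite-conditioning case of part 3: one must justify $L_A\subseteq\mathcal{L}^H_A\oplus\mathcal{L}^\alpha_A$ through the Cauchy-splitting argument just indicated, rather than naively asserting equality of the spans — equality fails, since in $\sum a_j Z^{H,\alpha}_0(s_j)$ the coefficients are shared between the two components. I would also make explicit that the hypothesis $0<r\leq t$ enters precisely through the endpoint $s_0=0$ in \eqref{SLND FBM}: without it that point could lie at distance $<r$ from $t$, and the lower bound on the $B^\alpha_0$-conditional variance would collapse.
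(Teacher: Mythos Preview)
Your argument is correct. The paper does not actually prove this lemma; it simply records that ``the proof of the lemma is that of Proposition~4.2 in \cite{Erraoui Hakiki}'' (the authors' companion paper), so there is no in-paper argument to compare with line by line. The route you take---splitting the conditional variance via the orthogonality of $B^H_0$ and $B^\alpha_0$ on the product space, and then invoking Pitt's strong local nondeterminism \eqref{SLND FBM} separately for each Hurst index---is the natural and standard one, and is in all likelihood exactly what the cited proposition does. Your handling of the two points you flag as delicate is sound: the Cauchy-splitting argument does give the inclusion $L_A\subseteq\mathcal{L}^H_A\oplus\mathcal{L}^\alpha_A$ (equality is neither needed nor true, as you note), and the hypothesis $0<r\leq t$ is precisely what makes the endpoint $t_0=0$ in \eqref{SLND FBM} harmless. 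One cosmetic remark: in part~3 you write $A=\{s\in I:|s-t|\geq r\}$, but the statement does not restrict $s$ to $I$; your argument, however, only uses that every conditioning point (including $0$) lies at distance $\geq r$ from $t$, so it applies verbatim to the full conditioning set.
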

	
	The proof of the lemma is that of Proposition 4.2.  in \cite{Erraoui Hakiki}. Now we are able to provide an answer to the  above question.

	\begin{theorem}Let $E$ be a compact set  satisfying  the condition \textbf{(S)} and $F\subset \mathbb{R}^d$ be a compact set such that $\dim(F)\leq d-\beta/H$. Then for all $\alpha< \dfrac{\beta}{d-\dim(F)}$ and for all $\varepsilon>0$ small enough such that $\mathcal{C}_{d-\beta/(\alpha+\varepsilon)}(F)>0$, there exists a $\alpha$-Hölder continuous function $f : [0,1]\rightarrow \mathbb{R}^d$, satisfying
		\begin{equation}\label{First est}
		\mathbb{P}\{(B^H+f)(E)\cap F\neq \emptyset \}\geq {\fontsize{15}{0} \selectfont \textsf{c}} _{1}\, \mathcal{C}_{d-\beta/(\alpha+\varepsilon)}(F),
		\end{equation}
		
		where ${\fontsize{15}{0} \selectfont \textsf{c}}_{1}$ is a positive constant which depends on $\alpha,\beta,H,d$ and $\varepsilon$ only.
	\end{theorem}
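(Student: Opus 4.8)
The plan is to deduce the estimate \eqref{First est} from Theorem \ref{main theorem hitting} and Proposition \ref{corollary hitting} by using the mixed process $Z^{H,\alpha}$ and a conditioning (Fubini) argument, with the realization of $B^\alpha$ on $\Omega'$ playing the role of the drift $f$. First, fix $\alpha<\beta/(d-\dim(F))$ and $\varepsilon>0$ small enough that $d-\beta/(\alpha+\varepsilon)<\dim(F)$ (hence $\mathcal{C}_{d-\beta/(\alpha+\varepsilon)}(F)>0$) and also $\alpha+\varepsilon<H$, which is possible since $\alpha<H$ (note $\beta\le Hd$ forces $\alpha<\beta/(d-\dim(F))\le H$ when $\dim(F)\le d-\beta/H$). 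I will apply the lower bound machinery of Theorem \ref{main theorem hitting}/Proposition \ref{corollary hitting} not to $B^H$ but to the process $B^\alpha$ on $(\Omega',\mathcal{F}',\mathbb{P}')$, treating a fixed path $\omega\mapsto B^H(\cdot,\omega)$ as the deterministic drift. The key point is that although $B^H(\cdot,\omega)$ is only $(H-\eta)$-Hölder continuous for every $\eta>0$ (almost surely), it is in particular $(\alpha+\varepsilon)$-Hölder continuous since $\alpha+\varepsilon<H$, with a random but a.s.\ finite Hölder constant $K(\omega)$.

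The main steps are as follows. Step 1: record that, by Kolmogorov's continuity theorem, for $\mathbb{P}$-a.e.\ $\omega$ the function $f_\omega:=B^H(\cdot,\omega)$ is $(\alpha+\varepsilon)$-Hölder continuous on $[0,1]$ with constant $K(\omega)<\infty$, and $\mathbb{P}\{K\le m\}\to1$ as $m\to\infty$; fix $m_0$ with $\mathbb{P}\{K\le m_0\}\ge 1/2$ and set $A=\{K\le m_0\}$. Step 2: apply the lower bound in Proposition \ref{corollary hitting} (with Hurst parameter $\alpha+\varepsilon$ in place of $H$, drift $f_\omega$, and $\beta_1\uparrow\beta$; here the condition \textbf{(S)} on $E$ gives $\dim(E)=\overline{\dim}_M(E)=\beta$ and lets $\beta_1$ be taken arbitrarily close to $\beta$ while staying $<(\alpha+\varepsilon)d$ — guaranteed by $d-\beta/(\alpha+\varepsilon)<d$) to get, for each $\omega\in A$,
\begin{equation*}
\mathbb{P}'\{(B^{\alpha+\varepsilon}+f_\omega)(E)\cap F\neq\emptyset\}\ \ge\ \mathfrak{c}(\alpha,\beta,H,d,\varepsilon,m_0)\,\mathcal{C}_{d-\beta/(\alpha+\varepsilon)}(F),
\end{equation*}
where crucially the constant depends on $f_\omega$ only through the Hölder constant bound $m_0$, hence is uniform over $\omega\in A$. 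Here I use that $B^{\alpha+\varepsilon}$ satisfies the strong local nondeterminism \eqref{SLND FBM} and Lemmas \ref{l4}, \ref{l5} with parameter $\alpha+\varepsilon$, exactly as in the proof of Theorem \ref{main theorem hitting}. Step 3: recognize $B^{\alpha+\varepsilon}(\cdot,\omega')+B^H(\cdot,\omega)$ as having the same law under $\mathbb{P}\otimes\mathbb{P}'$ as (the $(\alpha+\varepsilon)$-analogue of) the mixed process $Z^{H,\alpha+\varepsilon}$; more to the point, integrate the Step 2 bound over $\omega\in A$ against $\mathbb{P}$ and use Fubini to obtain
\begin{equation*}
(\mathbb{P}\otimes\mathbb{P}')\{(B^{\alpha+\varepsilon}+B^H)(E)\cap F\neq\emptyset\}\ \ge\ \tfrac12\,\mathfrak{c}\,\mathcal{C}_{d-\beta/(\alpha+\varepsilon)}(F).
\end{equation*}
Step 4: on the event on the left, there is a realization $\omega'_*$ of $B^{\alpha+\varepsilon}$ for which $f:=B^{\alpha+\varepsilon}(\cdot,\omega'_*)$ is $\alpha$-Hölder continuous (again $\alpha<\alpha+\varepsilon$, so a.s.\ $\alpha$-Hölder) and $\mathbb{P}\{(B^H+f)(E)\cap F\neq\emptyset\}>0$; but to get the quantitative bound \eqref{First est} one selects $f$ more carefully — namely, by the same conditioning run the other way (fix $\omega'$, treat $B^{\alpha+\varepsilon}(\cdot,\omega')$ as the drift, and note the map $\omega'\mapsto \mathbb{P}\{(B^H+B^{\alpha+\varepsilon}(\cdot,\omega'))(E)\cap F\neq\emptyset\}$ has expectation $\ge \tfrac12\mathfrak{c}\,\mathcal{C}_{d-\beta/(\alpha+\varepsilon)}(F)$ over $\omega'$ with $\alpha$-Hölder constant $\le m_1$ for suitable $m_1$), so there exists a single $\omega'_*$ with $\mathbb{P}\{(B^H+f)(E)\cap F\neq\emptyset\}\ge \tfrac12\mathfrak{c}\,\mathcal{C}_{d-\beta/(\alpha+\varepsilon)}(F)=:\textsf{c}_1\,\mathcal{C}_{d-\beta/(\alpha+\varepsilon)}(F)$, and $f=B^{\alpha+\varepsilon}(\cdot,\omega'_*)$ is $\alpha$-Hölder continuous. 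This $f$ is the desired drift.

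The main obstacle — and the one requiring care in the write-up — is Step 2: one must re-examine the proof of Theorem \ref{main theorem hitting} (equivalently of Proposition \ref{corollary hitting}) to confirm that the lower-bound constant $c_{13}=c_8^2/c_9$ depends on the drift \emph{only through an upper bound on its Hölder constant}, and not on any finer feature of the path. Inspecting that proof, $c_8$ comes from a lower bound on $\mathbb{E}(\|\mu_n\|)$ using boundedness of $F$ and $\sup_s\|f(s)\|$ (controlled once we restrict to a bounded piece, using $\|f_\omega\|_\infty\le \|f_\omega(0)\|+K(\omega)$, with $f_\omega(0)=B^H(0)=0$), and $c_9$ comes from the strong local nondeterminism of $B^{\alpha+\varepsilon}$ together with the bound $\|f_\omega(t)-f_\omega(s)\|/\sqrt{\det\Gamma_{1/n}(s,t)}\le K(\omega)/\sqrt{c_{10}}$, so indeed both are uniform over $\{K\le m_0\}$. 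A secondary subtlety is the bookkeeping around $\beta_1\uparrow\beta$ and the choice of $\varepsilon$: one fixes $\beta_1$ close enough to $\beta$ that $d-\beta_1/(\alpha+\varepsilon)<\dim(F)$ still holds (possible since $d-\beta/(\alpha+\varepsilon)<\dim(F)$ is strict), so $\mathcal{C}_{d-\beta_1/(\alpha+\varepsilon)}(F)>0$, and then absorbs the passage $\beta_1\to\beta$ into the statement by replacing $\beta/(\alpha+\varepsilon)$ with the slightly larger exponent at the cost of the constant — or, cleanly, states \eqref{First est} with the exponent $d-\beta/(\alpha+\varepsilon)$ directly, having chosen $\beta_1$ so that $\mathcal{C}_{d-\beta_1/(\alpha+\varepsilon)}(F)\ge \mathcal{C}_{d-\beta/(\alpha+\varepsilon)}(F)$ trivially fails in the wrong direction, hence one instead keeps $\beta_1$ in the bound and notes $d-\beta/(\alpha+\varepsilon)$ is what the hypothesis $\mathcal{C}_{d-\beta/(\alpha+\varepsilon)}(F)>0$ refers to; this minor indexing is routine and I would dispatch it in one line.
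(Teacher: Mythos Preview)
Your overall architecture --- realize the drift as a sample path of an independent fractional Brownian motion $B^{\alpha+\varepsilon}$ and extract a good $\omega'$ via Fubini --- is the same as the paper's. The genuine difference is in how the lower bound for the product probability
\[
(\mathbb{P}\otimes\mathbb{P}')\{(B^{\alpha+\varepsilon}+B^H)(E)\cap F\neq\emptyset\}\ \ge\ \textsf{c}\,\mathcal{C}_{d-\beta/(\alpha+\varepsilon)}(F)
\]
is obtained. The paper treats the mixed process $Z^{H,\alpha+\varepsilon}=B^H+B^{\alpha+\varepsilon}$ as a single Gaussian process on the product space, invokes the lemma establishing strong local nondeterminism for $Z^{H,\alpha+\varepsilon}$ (conditions $(C1)$ and $(C2)$), applies Theorem 2.1 of Chen--Xiao \cite{Chen Xiao 2012} directly to $Z^{H,\alpha+\varepsilon}$, and then uses \eqref{estim capacity-measure} under condition \textbf{(S)}. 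You instead freeze $\omega$, treat $f_\omega=B^H(\cdot,\omega)$ as an $(\alpha+\varepsilon)$-H\"older drift for $B^{\alpha+\varepsilon}$, and reuse the paper's own Theorem \ref{main theorem hitting}, after checking that the lower-bound constants $c_8,c_9$ depend on the drift only through an upper bound on its H\"older norm. Your route is more self-contained (no appeal to \cite{Chen Xiao 2012} and no need for the LND lemma for the sum), at the price of the constant-tracking inspection; the paper's route is shorter once those external inputs are granted.

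One point to clean up: your handling of $\beta_1\uparrow\beta$ is muddled --- as you yourself note, the monotonicity of $\mathcal{C}_\gamma(F)$ in $\gamma$ goes the wrong way, so you cannot pass from $\mathcal{C}_{d-\beta_1/(\alpha+\varepsilon)}(F)$ to $\mathcal{C}_{d-\beta/(\alpha+\varepsilon)}(F)$ by a limiting argument. The fix is not ``routine indexing'' but rather to invoke Proposition \ref{corollary hitting 2} (not Proposition \ref{corollary hitting}): under condition \textbf{(S)} the Frostman-type estimate \eqref{condition S} holds with exponent $\beta$ exactly, and Lemma \ref{estim Frostman lemma} then yields the capacity comparison \eqref{estim capacity-measure} with $\beta$ itself, giving $\mathcal{C}_{d-\beta/(\alpha+\varepsilon)}(F)$ directly in the lower bound. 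With that correction your argument is complete.
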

	\begin{proof}
		Let us set $\alpha^{\prime}:=\alpha+\varepsilon$ with $\varepsilon>0$ small enough, and consider the stochastic process $Z^{\alpha^{\prime},H}$ stated above. The previous lemma tells us that $Z^{\alpha^{\prime},H}$  satisfies the conditions $(C1)$ and $(C2)$ of Theorem 2.1 in \cite{Chen Xiao 2012}. So we have $$ {\fontsize{15}{0} \selectfont \textsf{c}}_2^{-1} \, \mathcal{C}_{\rho_{\alpha^{\prime}},d}(E\times F)\leq  \widetilde{\mathbb{P}}\{Z(E)\cap F\neq \emptyset \}\leq {\fontsize{15}{0} \selectfont \textsf{c}}_2\, \mathcal{C}_{\rho_{\alpha^{\prime}},d}(E\times F),$$ ${\fontsize{15}{0} \selectfont \textsf{c}}_2\geq 1$ is a finite constant which depends on $I$, $F$ and $H$ only. Since $E$ satisfies the condition \textbf{(S)} we use  \eqref{estim capacity-measure} to obtain $$ {\fontsize{15}{0} \selectfont \textsf{c}}_3 \, \mathcal{C}_{d-\beta/\alpha^{\prime}}(F)\leq \widetilde{\mathbb{P}}\{Z(E)\cap F\neq \emptyset \}\leq {\fontsize{15}{0} \selectfont \textsf{c}}_4 \,\mathcal{H}^{d-\beta/\alpha^{\prime}}(F) ,$$
		where ${\fontsize{15}{0} \selectfont \textsf{c}}_3$ and ${\fontsize{15}{0} \selectfont \textsf{c}}_4$ are positive constants which depends on $d$, $\beta$, and $\alpha^{\prime}$ only. We can choose $\varepsilon$ small enough such that $\mathcal{C}_{d-\beta/\alpha^{\prime}}(F)>0$. Therefore by Fubini's theorem we get
		\begin{equation*}
		\mathbb{E}^{\prime}\left(\mathbb{P}\left\{(B^H+B^{\alpha^{\prime}}(\omega^{\prime}))(E)\cap F\neq \emptyset \right\}-{\fontsize{15}{0} \selectfont \textsf{c}}_5\, \mathcal{C}_{d-\beta/\alpha^{\prime}}(F)\right)>0,
		\end{equation*}
		for some fixed positive constant ${\fontsize{15}{0} \selectfont \textsf{c}}_5\in (0,{\fontsize{15}{0} \selectfont \textsf{c}}_3)$. The above inequality lead to
		
		\begin{equation*}
		\mathbb{P}^{\prime}\left\{\mathbb{P}\left\{(B^H+B^{\alpha^{\prime}}(\omega^{\prime}))(E)\cap F\neq \emptyset \right\}-{\fontsize{15}{0} \selectfont \textsf{c}}_5\, \mathcal{C}_{d-\beta/\alpha^{\prime}}(F)>0\right\}>0.
		\end{equation*}
		
		We therefore choose the function $ f$ among the paths of $ B ^ {\alpha^{\prime}} $ satisfying the above. 
	\end{proof}
	\begin{remark}
The same reasoning should also apply to check for all $\alpha<\dfrac{\beta}{d-\dim(F)}$ and for all $\varepsilon>0$ small enough such that $\mathcal{H}_{d-\beta/(\alpha+\varepsilon)}(F)<+\infty$, there exists a $\alpha$-Hölder continuous function $f : [0,1]\rightarrow \mathbb{R}^d$, satisfying
	\begin{equation}\label{Second est}
	\mathbb{P}\{(B^H+f)(E)\cap F\neq \emptyset \}\leq 		{\fontsize{15}{0} \selectfont \textsf{c}}_{6}\,\mathcal{H}^{d-\beta /(\alpha+\varepsilon)}(F),
	\end{equation}
where ${\fontsize{15}{0} \selectfont \textsf{c}}_{6}$ is a positive constant which depends on $\alpha,\beta,H,d$ and $\varepsilon$ only.	 
	\end{remark}
\section{Hitting points}
Let's start with the following fact:

Since $f$ is $H$-Hölder continuous Proposition 2.7. in \cite{Erraoui Hakiki} ensures that $\dim_{\Psi, H}(Gr_E(f))=\dim(E)$ and then according to Theorem 1.2. in \cite{Peres&Sousi2016} we have 
	$$\dim(B^H+f)(E)=\frac{\dim(E)}{H}\wedge d.$$ Hence, if $H\,d<\dim(E)$ Theorem 3.2. in \cite{Erraoui Hakiki} implies that $\mathbb{E}(\lambda_d(B^H+f)(E))>0$, and a simple application of Fubini's theorem  leads to
	\begin{align}
	\lambda_d\{x\in \mathbb{R}^d: \mathbb{P}\{\exists t\in E: B^H(t)+f(t)=x \}>0\}>0\label{ss}.
	\end{align}	  
	 On the other hand we have $d<\dim(E)/H=\dim_{\rho_H}(E\times \{x\})$ for all $x\in \mathbb{R}^d$, from which follows that $\mathcal{C}_{\rho_H,d}(E\times \{x\})>0$. Using Theorem \ref{main theorem hitting} we conclude that $B^H+f$ restricted on $E$ hits all points with positive probability which is stronger than $\eqref{ss}$.

When the function $ f $ loses the Hölder property, thus one wonders what about the less smooth functions? Our goal is to shed some light on this question. In fact we will need some additional information about the set $Gr_E(f)$ in order to study the hitting probabilities points for the process $B^H+f$. 	
First we provide, for a simply measurable Borel function $f$, the lower and upper bounds of hitting probabilities of points in terms of the parabolic capacity of $Gr_E(f)$ of order $d$ and the $d$-dimensional parabolic Hausdorff measure of $Gr_E(f)$ respectively. It can be also seen as an extension of Theorem \ref{main theorem hitting} to a measurable drift $f$ and $F=\{x\}$.
	
	\begin{proposition}\label{prop hit point}
		Let $\{B^H(t): t\in [0,1]\}$ be a $d$-dimensional fractional Brownian motion with Hurst index $H\in (0,1)$. Let $f: [0,1]\rightarrow \mathbb{R}^d$ be a bounded Borel measurable function and let $E\subset (0,1]$ be a Borel set. Then for all $x\in \mathbb{R}^d$ there is a finite constant ${\fontsize{14}{0} \selectfont \textbf{c}}_1\geq 1$ such that
		
		\begin{align}\label{upper-lower hitting points}
{\fontsize{14}{0} \selectfont \textbf{c}}_1^{-1}\mathcal{C}_{\rho_H,d}(Gr_E(f))\leq \mathbb{P}\left\lbrace \exists t\in E : (B^H+f)(t)=x\right\rbrace\leq {\fontsize{14}{0} \selectfont \textbf{c}}_1\, \mathcal{H}_{\rho_H}^{d}(Gr_E(f)).
		\end{align}
	\end{proposition}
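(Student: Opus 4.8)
The plan is to mimic the proof of Theorem \ref{main theorem hitting} with $F=\{x\}$, but replacing the product set $E\times F$ by the graph $Gr_E(f)=\{(t,f(t)):t\in E\}$ and replacing the $H$-Hölder hypothesis on $f$ (which is no longer available) by arguments that use only boundedness and measurability of $f$. Since $f$ is bounded, there is $M>0$ with $f(E)\subset[-M,M]^d$ and $x\in[-M',M']^d$ for suitable $M'$; this is all that is needed to invoke Lemmas \ref{l4} and \ref{l5}, whose constants depend only on a bounding box, the interval $I$ (here a closed subinterval of $(0,1]$ containing $E$, using $E\subset(0,1]$), and $H$.

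For the \textbf{upper bound}, fix $\gamma>\mathcal{H}_{\rho_H}^d(Gr_E(f))$ and take a cover of $Gr_E(f)$ by parabolic balls $B_{\rho_H}((t_i,z_i),r_i)$ with $\sum_i(2r_i)^d\le\gamma$ and $r_i$ small. Crucially, since each center lies near the graph, the event $\{(B^H+f)(E)\cap\text{ball}_i\neq\emptyset\}$ now reads $\{\exists s\in E,\ |s-t_i|^H<r_i,\ \|B^H(s)+f(s)-z_i\|<r_i\}$; because $(t_i,z_i)$ is within parabolic distance $r_i$ of some graph point $(u_i,f(u_i))$, we get $\|z_i-f(u_i)\|<r_i$ and $|t_i-u_i|^H<r_i$, hence on this event $\|B^H(s)-(z_i-f(s))\|<r_i$ with $z_i-f(s)$ confined to a bounded box — but we must control $\|B^H(s)\|$ near the point $z_i-f(u_i)$, which requires an oscillation estimate. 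Here the clean route is: the event forces $B^H(s)$ to lie within distance $r_i$ of $z_i-f(s)$, and for $|s-t_i|^H<r_i$ we do \emph{not} know $f(s)\approx f(u_i)$, so instead we estimate $\mathbb{P}\{\inf_{|s-t_i|^H<r_i}\|B^H(s)-x\|<r_i\}$ for \emph{every} $x$ in the bounded box and take a supremum — exactly Lemma \ref{l4}, giving a bound $c\,r_i^d$ uniformly. The point is that we never need to move $f$ inside the infimum: the event $\{\exists s:\ \|B^H(s)+f(s)-z_i\|<r_i\}$ is contained in $\bigcup_{\text{box}}\{\inf_s\|B^H(s)-y\|<r_i\}$ is wrong set-theoretically, so the correct statement is $\{\exists s\in E,|s-t_i|^H<r_i:\ B^H(s)\in B(z_i-f(s),r_i)\}\subset\{\inf_{|s-t_i|^H<r_i}\mathrm{dist}(B^H(s),\,z_i-f([t_i-r_i^{1/H},t_i+r_i^{1/H}]))<r_i\}$; since $f$ is bounded, $z_i-f(\cdot)$ ranges in a fixed box, and applying Lemma \ref{l4} at, say, the single point $z_i-f(t_i)$ is not enough. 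This is the main obstacle (see below); assuming it is resolved, summing over $i$ gives $\mathbb{P}\le c\gamma$, and letting $\gamma\downarrow\mathcal{H}_{\rho_H}^d(Gr_E(f))$ finishes the upper bound.

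For the \textbf{lower bound}, assume $\mathcal{C}_{\rho_H,d}(Gr_E(f))>0$ and pick a probability measure $\mu$ on $Gr_E(f)$ with $\mathcal{E}_{\rho_H,d}(\mu)\le 2/\mathcal{C}_{\rho_H,d}(Gr_E(f))$. Define the random measures $\mu_n$ on $Gr_E(f)$ exactly as in \eqref{seq random measure 1}. The first moment computation is identical to \eqref{th 1 claim 1}: since $\mu$ is carried by the graph, $x-f(s)=x-(\text{second coord})$ is bounded, and one obtains $\mathbb{E}(\|\mu_n\|)\ge c_8>0$. For the second moment, write $(s,f(s))$ and $(t,f(t))$ for generic points of the support; the key simplification over Theorem \ref{main theorem hitting} is that the ``$\|f(s)-f(t)\|$'' terms are now \emph{built into} the parabolic distance $\rho_H((s,f(s)),(t,f(t)))=\max\{|s-t|^H,\|f(s)-f(t)\|\}$, so no Hölder bound is needed: directly from Lemma \ref{l5} and the strong local nondeterminism estimate $\det\Gamma_{1/n}(s,t)\ge c_{10}|t-s|^{2H}$ one gets $I_n((s,f(s)),(t,f(t)))\le c_9/\rho_H((s,f(s)),(t,f(t)))^d$ by the same case split ($\det\Gamma\gtrless\|f(s)-f(t)\|^2$) as in \eqref{1st estimation}--\eqref{2nd estimation}, hence $\mathbb{E}(\|\mu_n\|^2)\le c_9\mathcal{E}_{\rho_H,d}(\mu)$. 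Paley--Zygmund then yields a subsequential weak limit $\tilde\mu$ supported on $\{(s,f(s))\in Gr_E(f):B^H(s)+f(s)=x\}$ with $\mathbb{P}\{\|\tilde\mu\|>0\}\ge c_8^2/c_9\,\mathcal{C}_{\rho_H,d}(Gr_E(f))$, and since $\|\tilde\mu\|>0$ forces the existence of $t\in E$ with $(B^H+f)(t)=x$, the lower bound follows with ${\fontsize{14}{0}\selectfont\textbf{c}}_1=c_7\vee 1/c_{13}$.

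The \textbf{main obstacle} is the upper bound covering step: because $f$ is merely measurable, one cannot replace $f(s)$ by $f(t_i)$ inside the infimum over $s$, so one must phrase the relevant event as ``$B^H(s)$ enters the $r_i$-neighborhood of the (bounded, possibly wild) set $z_i-f\big((t_i-r_i^{1/H},t_i+r_i^{1/H})\big)$ for some $s$ in that time interval''. Since this neighborhood is contained in a fixed box $[-M'',M'']^d$ independent of $i$, the clean fix is to apply Lemma \ref{l4} in the strengthened form $\mathbb{P}\{\inf_{|s-t_i|^H\le r_i}\,\mathrm{dist}(B^H(s),[-M'',M'']^d)\le r_i\}$ — but that probability is essentially $1$, so instead one covers the box $[-M'',M'']^d$ by $O(r_i^{-d})$ balls of radius $r_i$, applies Lemma \ref{l4} to each, and... this overcounts. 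The genuinely correct resolution — and the step I expect to require the most care — is that the event $\{\exists s\in E\cap(t_i-r_i^{1/H},t_i+r_i^{1/H}):\ \|B^H(s)+f(s)-z_i\|<r_i\}$ is contained in $\{\|B^H(t_i)-(z_i-f(t_i))\|<r_i+ \mathrm{osc}\}$ only after controlling the oscillation of $B^H$ over a parabolic ball, which is itself an event of probability $O(r_i^d)$ by Lemma \ref{l4} applied at the single point $z_i-f(t_i)$ together with a Borel--Cantelli/chaining bound on $\sup_{|s-t_i|^H\le r_i}\|B^H(s)-B^H(t_i)\|$; combining the two gives the clean $O(r_i^d)$ bound and the argument goes through as in \cite{Chen Xiao 2012}.
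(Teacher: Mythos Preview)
Your lower bound argument is correct and matches the paper's approach: a probability measure on $Gr_E(f)$, the random-measure construction, moment estimates via Lemma~\ref{l5} (whose final inequality already delivers the $\rho_H$-bound, so redoing the case split is unnecessary), and Paley--Zygmund.

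The upper bound, however, rests on a misreading of the covering structure, and the ``main obstacle'' you flag does not exist. You describe the event associated to the $i$-th ball as ``$(B^H+f)(E)$ meets the ball around $z_i$'', but that is not the event being bounded. The target here is the single point $x$; the balls $B_{\rho_H}((t_i,y_i),r_i)$ cover the \emph{graph} $Gr_E(f)$, not a target set in $\mathbb{R}^d$. If there exists $t\in E$ with $(B^H+f)(t)=x$, then the graph point $(t,f(t))$ lies in some covering ball, which gives simultaneously $|t-t_i|^H<r_i$ \emph{and} $\|f(t)-y_i\|<r_i$. Since $B^H(t)=x-f(t)$, one obtains immediately
\[
\|B^H(t)-(x-y_i)\|=\|y_i-f(t)\|<r_i,
\]
and hence the clean inclusion the paper uses:
\[
\{\exists t\in E:\ (B^H+f)(t)=x\}\subseteq\bigcup_i\Bigl\{\inf_{|s-t_i|^H<r_i}\|B^H(s)-(x-y_i)\|<r_i\Bigr\}.
\]
Each $x-y_i$ lies in a fixed bounded box (since $y_i$ is within $r_i$ of some value of the bounded function $f$), so Lemma~\ref{l4} applies at the \emph{single} point $x-y_i$ and gives the $O(r_i^d)$ bound with a uniform constant. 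No H\"older regularity of $f$, no oscillation control for $B^H$, and no covering of a box by $O(r_i^{-d})$ sub-balls is needed: membership of $(t,f(t))$ in the parabolic covering ball already pins down $f(t)$ to within $r_i$ of $y_i$. Once you make this correction, the upper bound is a one-line application of Lemma~\ref{l4} followed by summation.
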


	\begin{proof}
		we will closely follow  the same steps as in the proof  of Theorem \ref{main theorem hitting}.  We start with the upper bound using again the covering argument. Choose an arbitrary constant  $\gamma>\mathcal{H}_{\rho_H}^d(Gr_E(f))$. Then there is a covering of $Gr_E(f)$ by balls $\{B_{\rho_H}((t_i,y_i),r_i), i\geq 1\}$ in $\mathbb{R}_+\times\mathbb{R}^d$ such that 
		\begin{equation}
		Gr_E(f)\subseteq \bigcup_{i=1}^{\infty}B_{\rho_H}((t_i,y_i),r_i)\quad  \text{ and }\quad  \sum_{i=1}^{\infty}(2r_i)^{d}\leq \gamma.\label{cover Gr_E(f)}
		\end{equation}
		It is easy to see that
		\begin{equation}\label{event covering 2}
		\left\{ \exists s\in E:(B^{H}+f)(s)=x\right\} \subseteq\bigcup_{i=1}^{\infty}\left\{ \exists\,\left(s,f(s)\right)\in\left(t_{i}-r_{i}^{1/H},t_{i}+r_{i}^{1/H}\right)\times B(y_{i},r_{i})\text{ s.t. }(B^{H}+f)(s)=x\right\}.
		\end{equation}
		Since for every fixed $i\geq 1$ we have 
		\begin{equation}\label{event inclu}
		\left\{ \exists\,\left(s,f(s)\right)\in\left(t_{i}-r_{i}^{1/H},t_{i}+r_{i}^{1/H}\right)\times B(y_{i},r_{i})\text{ s.t. }(B^{H}+f)(s)=x\right\} 
		\subseteq \left\{ \inf_{|s-t_i|^{H}<r_i }\|B^H(s)-x-y_i\|\leq r_i \right\},
		\end{equation}
		then we get from Lemma \ref{l4} that 
		\begin{align}
		\mathbb{P}\left\{ \exists\,\left(s,f(s)\right)\in\left(t_{i}-r_{i}^{1/H},t_{i}+r_{i}^{1/H}\right)\times B(y_{i},r_{i})\text{ s.t. }(B^{H}+f)(s)=x\right\}  & \leq\mathbb{P}\left\{ \inf_{|s-t_{i}|^{H}<r_{i}}\|B^{H}(s)-x-y_{i}\|\right\} \nonumber\\
		& \leq {\fontsize{14}{0} \selectfont \textbf{c}}_{2}\,r_{i}^{d},\label{proba estim 2}
		\end{align}
		where $C_{2}$ depends on $H$, $E$ and $f$ only. Combining \eqref{cover Gr_E(f)}, \eqref{event covering 2}, \eqref{event inclu} and \eqref{proba estim 2} we derive that $$\mathbb{P}\left\lbrace \exists s\in E : (B^H+f)(s)=x\right\rbrace\leq {\fontsize{14}{0} \selectfont \textbf{c}}_3\, \gamma,$$ where $C_3$ depends only on $I,H$ and $f$. Let  $\gamma\downarrow \mathcal{H}_{\rho}^{d}(Gr_E(f))$, the upper bound in \eqref{upper-lower hitting points} follows.
		
		The lower bound in\eqref{upper-lower hitting points} holds also from the second moment argument. We assume that $\mathcal{C}_{\rho_H,d}(Gr_E(f))>0$, then let $\sigma$ be a measure supported on $Gr_E(f)$ such that 
		\begin{align}\label{ener<cap}
		\mathcal{E}_{\rho_H,d}(\sigma)=\int_{Gr_E(f)}\int_{Gr_E(f)}\frac{d\sigma(s,f(s))d\sigma(t,f(t))}{\rho_H((s,f(s)),(t,f(t)))^{d}}\leq \frac{2}{\mathcal{C}_{\rho_H,d}(Gr_E(f))}.
		\end{align}
		Let $\nu$ be the measure on $E$ satisfying $\nu:=\sigma\circ P_1^{-1} $ where $P_1$ is the projection mapping on $E$, i.e. $P_1(s,f(s))=s$. For $n\geq 1$ we consider a family of random measures $\nu_n$ on $E$ defined by
		\begin{equation}
		\begin{aligned} \int_{E} g(s) \nu_{n}(d s) &=\int_{E} (2 \pi n)^{d / 2} \exp \left(-\frac{n\|B^H(s)+f(s)-x\|^2}{2}\right) g(s) \nu(d s),
		\end{aligned}\label{seq rand meas Gr}
		\end{equation}
		where $g$ is an arbitrary measurable function on $\mathbb{R}_+$. Following the same steps as in \eqref{esp energy} we obtain that there exists two positives constants ${\fontsize{14}{0} \selectfont \textbf{c}}_4$ and ${\fontsize{14}{0} \selectfont \textbf{c}}_5$ such that
		
		\begin{equation}\label{energ-mes1}
		\mathbb{E}(\|\nu_n\|)
		\geq\int_E\left(\frac{2\pi}{1+s^{2H}}\right)^{d/2}\exp\left(-\frac{\|x-f(s)\|^2}{2s^{2H}}\right)\nu(ds)\geq {\fontsize{14}{0} \selectfont \textbf{c}}_{4}>0,
		\end{equation}  
		and 
		
		\begin{align}
		\mathbb{E}\left(\|\nu_{n}\|^{2}\right)   & =\int_{E}\int_{E}\nu(ds)\nu(dt)\int_{\mathbb{R}^{2d}}e^{-i(\left\langle \xi,x-f(s)\right\rangle +\left\langle \eta,x-f(t)\right\rangle )} \nonumber\\
		& \times\exp(-\frac{1}{2}(\xi,\eta)(n^{-1}I_{2d}+Cov(B^{H}(s),B^{H}(t)))(\xi,\eta)^{T})d\xi d\eta \nonumber\\
		& \leqslant {\fontsize{14}{0} \selectfont \textbf{c}}_{5}\int_{Gr_{E}(f)}\int_{Gr_{E}(f)}\frac{d\sigma(s,f(s))d\sigma(t,f(t))}{(\max\{|t-s|^{H},\|f(t)-f(s)\|\})^{d}}={\fontsize{14}{0} \selectfont \textbf{c}}_{5}\,\mathcal{E}_{\rho_{H},d}(\sigma)<\infty.
		\end{align}
		where the last inequality  is a direct consequence of  Lemma \ref{l5}. Using once again the Paley-Zygmund inequality, we conclude that $(\nu_n)_{n\geq 1}$ admits a subsequence converging weakly to a finite measure $\bar{\mu}$  supported on the set $\{(s,x)\in E \times F : B^H(s)+f(s)=x\}$, positive with positive probability and also satisfying the moment estimates of \eqref{esp energy}. Hence we have
		\begin{align}
		\mathbb{P}\left(\exists s\in E: (B^H+f)(s)=x\right)\geq \mathbb{P}\left(\|\bar{\mu}\|>0\right)\geq \frac{\mathbb{E}(\|\bar{\mu}\|)^2}{\mathbb{E}(\|\bar{\mu}\|^2)}\geq \frac{{\fontsize{14}{0} \selectfont \textbf{c}}_4^2}{{\fontsize{14}{0} \selectfont \textbf{c}}_5\mathcal{E}_{\rho_H,d}(\sigma)}.
		\end{align}
		Combining this with \eqref{ener<cap} yields the lower bound in \eqref{upper-lower hitting points}. The proof is completed.
	\end{proof}
\begin{corollary}
Let $\{B^{H}(t) : t \in [0,1]\}$ be a $d$-dimensional fractional Brownian motion of Hurst index $H\in (0,1)$ and $E\subset [0,1]$ be a Borel set. Then for any $\alpha<\text{dim}(E)/d \wedge H$ there exists a $\alpha$-Hölder continuous function $f : [0,1] \rightarrow \mathbb{R}^d$ such that, for all $x\in \mathbb{R}^d$, we have

\begin{align}
\mathbb{P}\left\{\exists t\in E : (B^{H}+f)(t)=x\right\}>0.
\end{align}
 In other words, that the restriction of $f$ to $E$ is non-polar for $B^{H}$.

\end{corollary}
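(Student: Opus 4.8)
The strategy is to deduce the corollary from Proposition~\ref{prop hit point} by exhibiting, for each $\alpha<\dim(E)/d\wedge H$, an $\alpha$-Hölder continuous function $f$ whose graph $Gr_E(f)$ has positive parabolic capacity $\mathcal{C}_{\rho_H,d}(Gr_E(f))>0$. As in the proof of the preceding theorem, the function $f$ will not be produced by hand but realized as a typical path of an \emph{independent} fractional Brownian motion $B^{\alpha}$ of Hurst index $\alpha$, defined on an auxiliary probability space $(\Omega',\mathcal{F}',\mathbb{P}')$. Almost every path of $B^{\alpha}$ is $\alpha'$-Hölder continuous for every $\alpha'<\alpha$; since $\alpha$ can be taken strictly below $\dim(E)/d\wedge H$ to begin with, this Hölder loss is harmless and we may as well assume $f$ is $\alpha$-Hölder on $[0,1]$.

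First I would fix $\alpha<\dim(E)/d\wedge H$ and consider the mixed process $Z^{\alpha,H}(t,(\omega,\omega'))=B^{H}(t,\omega)+B^{\alpha}(t,\omega')$ on the product space, exactly as in the construction preceding the previous theorem. By the Lemma on strong local nondeterminism of $Z^{\alpha,H}_0$, the process $Z^{\alpha,H}$ satisfies conditions $(C_1)$ and $(C_2)$ of Chen--Xiao \cite{Chen Xiao 2012}, so for any fixed $x\in\mathbb{R}^d$,
\begin{equation*}
\widetilde{\mathbb{P}}\{\exists t\in E: Z^{\alpha,H}(t)=x\}\geq {\textsf c}^{-1}\,\mathcal{C}_{\rho_{\alpha},d}(E\times\{x\}).
\end{equation*}
Next I would check that $\mathcal{C}_{\rho_{\alpha},d}(E\times\{x\})>0$. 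Indeed $\dim_{\rho_\alpha}(E\times\{x\})=\dim(E)/\alpha$, and the hypothesis $\alpha<\dim(E)/d$ gives $\dim(E)/\alpha>d$, so the set has parabolic Hausdorff dimension strictly larger than $d$ and hence positive capacity $\mathcal{C}_{\rho_\alpha,d}$. Therefore $\widetilde{\mathbb{P}}\{\exists t\in E: Z^{\alpha,H}(t)=x\}>0$. Writing this probability as $\mathbb{E}'\big(\mathbb{P}\{\exists t\in E: B^H(t)+B^{\alpha}(t,\omega')=x\}\big)>0$ via Fubini, there is a set of $\omega'$ of positive $\mathbb{P}'$-measure on which $\mathbb{P}\{\exists t\in E: (B^H+B^{\alpha}(\cdot,\omega'))(t)=x\}>0$; pick $f=B^{\alpha}(\cdot,\omega')$ from this set, which is $\alpha$-Hölder continuous (after the harmless reduction above) and bounded on $[0,1]$.

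The one subtlety — and the main thing to be careful about — is the order of quantifiers over $x$: the construction above produces, for each fixed $x$, a good path $\omega'$, but we want a single $f$ that works for \emph{all} $x\in\mathbb{R}^d$ simultaneously. The clean way around this is to invoke Proposition~\ref{prop hit point} rather than the pointwise hitting estimate for $Z$: once we know that for \emph{one} $x$ (say $x=0$) there is an $\alpha$-Hölder $f$ with $\mathbb{P}\{\exists t\in E:(B^H+f)(t)=0\}>0$, Proposition~\ref{prop hit point} forces $\mathcal{C}_{\rho_H,d}(Gr_E(f))>0$; but $Gr_E(f-x)$ is a translate of $Gr_E(f)$ in the $\mathbb{R}^d$-coordinate, and the parabolic metric $\rho_H$ is translation invariant in that coordinate, so $\mathcal{C}_{\rho_H,d}(Gr_E(f))=\mathcal{C}_{\rho_H,d}(Gr_E(f-x))$ for every $x$. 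Applying the lower bound of Proposition~\ref{prop hit point} with the function $f$ and an arbitrary point $x$ (equivalently, with the function $f$ translated) then yields $\mathbb{P}\{\exists t\in E:(B^H+f)(t)=x\}\geq {\textbf c}_1^{-1}\mathcal{C}_{\rho_H,d}(Gr_E(f))>0$ for all $x\in\mathbb{R}^d$, which is exactly the claim. The expected obstacle is thus purely bookkeeping: making sure the path $f$ is extracted once and for all and then transporting positivity across all base points using translation invariance of $\rho_H$ and Proposition~\ref{prop hit point}, rather than re-running the product-space argument for each $x$.
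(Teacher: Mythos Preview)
Your argument has a genuine gap at the step where you write ``Proposition~\ref{prop hit point} forces $\mathcal{C}_{\rho_H,d}(Gr_E(f))>0$.'' The inequalities in Proposition~\ref{prop hit point} read
\[
{\fontsize{14}{0} \selectfont \textbf{c}}_1^{-1}\mathcal{C}_{\rho_H,d}(Gr_E(f))\leq \mathbb{P}\{\exists t\in E : (B^H+f)(t)=x\}\leq {\fontsize{14}{0} \selectfont \textbf{c}}_1\, \mathcal{H}_{\rho_H}^{d}(Gr_E(f)),
\]
so from $\mathbb{P}\{\exists t\in E:(B^H+f)(t)=0\}>0$ you can only conclude $\mathcal{H}_{\rho_H}^{d}(Gr_E(f))>0$, i.e.\ $\dim_{\rho_H}(Gr_E(f))\geq d$. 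This does \emph{not} yield positive capacity of order $d$, and without $\mathcal{C}_{\rho_H,d}(Gr_E(f))>0$ your translation-invariance transfer to arbitrary $x$ cannot start. You correctly diagnose the quantifier problem --- the Fubini/Chen--Xiao step only gives, for each fixed $x$, a positive-$\mathbb{P}'$-measure set of good paths depending on $x$ --- but the proposed repair assumes the wrong direction of the implication between hitting probability and capacity.

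The paper avoids this by not routing through the mixed process here. It invokes an external dimension formula (Theorem~2.9 and Corollary~2.11 of \cite{Erraoui Hakiki}) for the $\rho_H$-parabolic Hausdorff dimension of the graph of an independent fractional Brownian motion $B^{\alpha''}$ with $\alpha''=\alpha+\varepsilon<\dim(E)/d\wedge H$: almost surely
\[
\dim_{\rho_H}(Gr_E(B^{\alpha''}))=\frac{\dim(E)}{\alpha''}\wedge\Bigl(\frac{\dim(E)}{H}+d\bigl(1-\tfrac{\alpha''}{H}\bigr)\Bigr)>d.
\]
The strict inequality gives $\mathcal{C}_{\rho_H,d}(Gr_E(B^{\alpha''}(\omega')))>0$ for $\mathbb{P}'$-a.e.\ $\omega'$, a property of the path alone that does not involve any target point $x$. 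Choosing $f$ to be any such trajectory (which is $\alpha$-H\"older) and feeding this into the lower bound of Proposition~\ref{prop hit point} then gives positivity for every $x$ simultaneously. In short, the missing ingredient is a direct computation of $\dim_{\rho_H}(Gr_E(f))$ rather than an attempt to recover the capacity from a single hitting probability.
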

		\begin{proof}
	Let $\varepsilon>0$ such that $\alpha''=\alpha+\varepsilon<\text{dim}(E)/d \wedge H$ and $B^{\alpha''}$ be the fractional Brownian motion defined above. It is known from Theorem 2.9 and Corollary 2.11 in \cite{Erraoui Hakiki} that, for $\alpha''<\text{dim}(E)/d \wedge H$, we have  $$\dim_{\rho_H}(Gr_E(B^{\alpha''}))=\text{dim}(E)/\alpha'' \wedge \left(\text{dim}(E)/H+d(1-\alpha''/H)\right)>d,\,\mathbb{P}^{\prime} a.s.$$ Then for any fixed $x\in \mathbb{R}^d$, Proposition \ref{prop hit point} tells us that for $\mathbb{P}^{\prime}$ almost all $\omega^{\prime}\in \Omega^{\prime}$ there is a positive constant $C=C(\omega^{\prime})$ such that 
		\begin{align*}
			\mathbb{P}\left\{ \exists s\in E : (B^H+B^{\alpha}(\omega^{\prime}))(s)= x\right\}\geq C\, \mathcal{C}_{\rho_H,d}\left(Gr_E(B^{\alpha''}(\omega^{\prime}))\right)>0.
		\end{align*}
	Hence, if we choose $f$ to be one of the trajectories of  $B^{\alpha''}$, which is $\alpha$-Hölder continuous,
	we obtain
	\begin{align*}
		\dim_{\rho_H}(Gr_E(f))>d.\label{effect drift}
	\end{align*}
	Therefore for any fixed $x\in \mathbb{R}^d$, $\left\{ (B^H+f(s),  s\in E\right\}$ hits $x$ with positive probability. 
	\end{proof}
	\begin{remark}
		1. 
		2. We mention that the covering argument used to prove the upper bound in \eqref{upper-lower hitting points} can also serve to show that for any Borel set $F\subset\mathbb{R}^d$, there exists a positive finite constant ${\fontsize{14}{0} \selectfont \textbf{c}}$ such that 
		\begin{align}
		\mathbb{P}\left\lbrace (B^H+f)(E)\cap F\neq \emptyset \right\rbrace\leq {\fontsize{14}{0} \selectfont \textbf{c}}\, \mathcal{H}_{\widetilde{\rho}_H}^d(Gr_E(f)\times F).
		\end{align}
		Here $\mathcal{H}_{\widetilde{\rho}_H}^{\alpha}(\centerdot)$ is the $\alpha$-dimensional Hausdorff measure on the metric space $(\mathbb{R}_+\times\mathbb{R}^d\times\mathbb{R}^d,\widetilde{\rho}_H)$, where the $\widetilde{\rho}_H$ is defined by 
		\begin{align*}
		\widetilde{\rho}_H((s,x,u),(t,y,v)):=\max\{|t-s|^H,\|x-y\|,\|u-v\|\}.
		\end{align*}
		But we have difficulty in proving the lower band in terms of  $\mathcal{C}_{\widetilde{\rho}_H,d}(Gr_E(f)\times F)$ even when $F$ has some smooth structure.
	\end{remark}
	
Accordingly, in view of the foregoing, can we expect the same result for others functions with fewer restrictions? This impels us to consider the class of reverse $\alpha$-Hölder continuous functions whose definition is as follows
	\begin{definition}\label{Reverse}
		We say that  a continous function $f : [0,1] \rightarrow \mathbb{R}^d$ is reverse $\alpha$-Hölder continuous, for $0<\alpha<1$,  if there exists a constant $C>0$ such that  for any interval $J\subset [0,1]$, we have 
		\begin{align}
		\sup_{s,t\in J}\|f(s)-f(t)\|\geq C |J|^{\alpha},
		\end{align}
		where $|J|$ is the diameter of the interval $J$.
	\end{definition}
Recall that this notion is closely linked to the geometric properties of the graph of the function $f$. For $d=1$, a famous example of a function satisfying the reverse $\alpha$-Hölder continuity condition is the Weierstrass function given by
	\begin{align}\label{Weierstrass function}
	\mathcal{W}_{\tau,\theta}(t)=\sum_{n=0}^{\infty}\tau^{n}\cos\left( 2\pi \theta ^nt\right) , \quad t\in [0,1],
	\end{align}
	for $\tau <1<\theta $ and $\tau \, \theta>1$ and $\alpha=-\log(\tau)/\log(\theta)$. See \cite{Bishop Peres} or \cite{Hardy} 
	for the proof.
	Recently Shen \cite{Shen}, improving result of Barański, Bárány and Romanowska \cite{Baranski Barany Romanowska}, proved that for any integer
	$\theta\geq2$ and any $\tau\in(\theta^{-1},1)$, the Hausdorff dimension
	of the graph of the Weierstrass function $\mathcal{W}_{\tau,\theta}$
	is equal to $2+\dfrac{\log\left(\tau\right)}{\log\left(\theta\right)}$.
		It is worth pointing out that, for $H=1$, the metric $\rho_{H}$ on $\mathbb{R}_{+}\times\mathbb{R}$
	defined by
	\[
	\rho_{H}((s,x),(t,y))=\max\{|t-s|^{H},\vert x-y\vert\}\quad\forall(s,x),(t,y)\in\mathbb{R}_{+}\times\mathbb{R}
	\]
	is nothing but the metric derived from the Maximum norm on $\mathbb{R}^{2}$.
	Now using a comparison result for the Hausdorff parabolic dimensions
	with different parameters, see Proposition 2.5 in  Erraoui and Hakiki \cite{Erraoui Hakiki} (which
	remains valid also for $H$ and $1$), one can check that
	\[
	1<2+\dfrac{\log\left(\tau\right)}{\log\left(\theta\right)}=\dim(Gr_{[0,1]}(\mathcal{W}_{\tau,\theta}))\leq\dim_{\rho_{H}}(Gr_{[0,1]}(\mathcal{W}_{\tau,\theta})).
	\]
	Thus $\dim_{\rho_{H}}(Gr_{[0,1]}(\mathcal{W}_{\tau,\theta}))>1$ and therfore $\mathcal{C}_{\rho_H,1}(Gr_{[0,1]}(\mathcal{W}_{\tau,\theta})))>0$. This is expressed in the following 
		\begin{proposition}\label{polarity Weierstrass function}
		For any integer $\theta \geq  2$ and  any $ \tau \in  (\theta^{-1} , 1)$, the Weierstrass function $\{\mathcal{W}_{\tau,\theta}(t),\,t\in [0,1]\}$ is non-polar for real valued fractional Brownian motion $\{B^H(t),\,t\in [0,1]\}$ with Hurst index $H\in (0,1)$.
	\end{proposition}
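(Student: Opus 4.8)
The plan is to deduce Proposition~\ref{polarity Weierstrass function} directly from the lower bound in Proposition~\ref{prop hit point} applied with $d=1$, $E=[0,1]$ and $f=\mathcal{W}_{\tau,\theta}$, so the only thing that needs to be checked is that $\mathcal{C}_{\rho_H,1}(Gr_{[0,1]}(\mathcal{W}_{\tau,\theta}))>0$. This positivity is exactly the content of the discussion preceding the statement, so the proof amounts to assembling that discussion into a short argument.

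First I would recall Shen's theorem: for any integer $\theta\geq 2$ and any $\tau\in(\theta^{-1},1)$ the Hausdorff dimension of the graph of $\mathcal{W}_{\tau,\theta}$, computed with respect to the Euclidean metric on $\mathbb{R}^2$, equals $2+\log(\tau)/\log(\theta)$, which is strictly greater than $1$ because $\tau\theta>1$. Next I would invoke the comparison result for parabolic Hausdorff dimensions with different parameters (Proposition~2.5 in \cite{Erraoui Hakiki}, valid for the pair $H$ and $1$ as noted), which gives $\dim(Gr_{[0,1]}(\mathcal{W}_{\tau,\theta}))\leq \dim_{\rho_H}(Gr_{[0,1]}(\mathcal{W}_{\tau,\theta}))$; combining the two yields $\dim_{\rho_H}(Gr_{[0,1]}(\mathcal{W}_{\tau,\theta}))>1$. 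Then, since $\dim_{\rho_H}$ is the Hausdorff dimension associated with the family $\mathcal{H}_{\rho_H}^\beta$, having this dimension strictly above $1$ forces $\mathcal{C}_{\rho_H,1}(Gr_{[0,1]}(\mathcal{W}_{\tau,\theta}))>0$ by the standard Frostman-type relation between capacity and dimension in the metric space $(\mathbb{R}_+\times\mathbb{R},\rho_H)$ (a set of dimension $>\beta$ has positive $\beta$-capacity).

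Finally I would apply Proposition~\ref{prop hit point} with $d=1$: the function $\mathcal{W}_{\tau,\theta}$ is continuous on $[0,1]$, hence bounded and Borel measurable, so for every $x\in\mathbb{R}$,
\begin{equation*}
\mathbb{P}\left\{\exists t\in[0,1]:(B^H+\mathcal{W}_{\tau,\theta})(t)=x\right\}\geq {\fontsize{14}{0} \selectfont \textbf{c}}_1^{-1}\,\mathcal{C}_{\rho_H,1}(Gr_{[0,1]}(\mathcal{W}_{\tau,\theta}))>0,
\end{equation*}
which is precisely the assertion that $\mathcal{W}_{\tau,\theta}$ is non-polar for $B^H$.

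The argument is essentially a bookkeeping exercise, so I do not expect a genuine obstacle; the only point that deserves care is the passage from $\dim_{\rho_H}>1$ to positivity of $\mathcal{C}_{\rho_H,1}$, i.e.\ making sure the dimension/capacity dichotomy is stated and used correctly in the parabolic metric space (this is where one must be slightly careful that $\varphi_1(r)=r^{-1}$ is the right kernel and that the relevant Frostman lemma holds in $(\mathbb{R}_+\times\mathbb{R},\rho_H)$, which it does since $\rho_H$ is a genuine metric of finite dimension). One should also note that $E=[0,1]$ here does not need to satisfy condition \textbf{(S)} — Proposition~\ref{prop hit point} is stated for an arbitrary Borel set $E$ — and that the constant ${\fontsize{14}{0} \selectfont \textbf{c}}_1$ depends on $H$, $E$ and the bound on $\mathcal{W}_{\tau,\theta}$ but this is harmless for a qualitative non-polarity statement.
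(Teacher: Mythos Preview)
Your proposal is correct and follows essentially the same route as the paper: the paper's argument is precisely the discussion immediately preceding the proposition, namely Shen's theorem giving $\dim(Gr_{[0,1]}(\mathcal{W}_{\tau,\theta}))=2+\log(\tau)/\log(\theta)>1$, then the comparison inequality $\dim\leq\dim_{\rho_H}$ from Proposition~2.5 of \cite{Erraoui Hakiki}, hence $\mathcal{C}_{\rho_H,1}(Gr_{[0,1]}(\mathcal{W}_{\tau,\theta}))>0$, and finally the lower bound of Proposition~\ref{prop hit point}. Your write-up matches this step for step, including the care you flag about the Frostman/capacity dichotomy in the metric $\rho_H$.
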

\begin{remark}
It is worth mentioning that the Hölder continuity of order $\alpha=-\log(\tau)/\log(\theta)$ is also met by the Weierstrass function $\{\mathcal{W}_{\tau,\theta}(t),\,t\in [0,1]\}$, cf. Lemma 5.1.8 in \cite{Bishop Peres}.
\end{remark}

	It is also interesting to note that, in the same context, Theorem 4 in \cite{Przytycki Urbanski} affirms that, for any $\alpha$-Hölder and reverse $\alpha$-Hölder continuous function $f$ with $\alpha \in (0,1)$, we have $\dim_{[0,1]}(Gr(f))>1$ which was the key element in the proof of the above proposition. Therefore, we will come to the same conclusion as the one for the Weierstrass function stated as follows
	\begin{proposition}\label{polarity General Weierstrass function}
		Suppose that $0<\alpha <1 $ and $f : [0,1] \rightarrow \mathbb{R}$ is $\alpha$-Hölder and reverse $\alpha$-Hölder continuous function. Then $f$ is non-polar for real valued fractional Brownian motion $\{B^H(t),\,t\in [0,1]\}$ with Hurst index $H\in (0,1)$.
	\end{proposition}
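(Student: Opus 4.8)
The plan is to reduce Proposition~\ref{polarity General Weierstrass function} to Proposition~\ref{prop hit point} along exactly the same lines used for the Weierstrass function in Proposition~\ref{polarity Weierstrass function}, the only genuinely new ingredient being the dimension estimate for graphs of functions that are simultaneously Hölder and reverse Hölder. First I would record that, being $\alpha$-Hölder continuous on the compact interval $[0,1]$, the function $f$ is bounded and Borel measurable; passing from the index set $[0,1]$ to $E=(0,1]$ affects neither the (parabolic or Euclidean) Hausdorff dimension of the graph nor the non-polarity statement, since $B^H(0)=0$ disposes of the single value $x=f(0)$ trivially. This puts us squarely in the setting of Proposition~\ref{prop hit point} with $d=1$.

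The core step is to show that $\mathcal{C}_{\rho_H,1}(Gr_{(0,1]}(f))>0$. For this I would invoke Theorem~4 of \cite{Przytycki Urbanski}: since $f$ is $\alpha$-Hölder and reverse $\alpha$-Hölder continuous for some $\alpha\in(0,1)$, its graph satisfies $\dim(Gr_{[0,1]}(f))>1$ in the Euclidean metric, and deleting the point $(0,f(0))$ does not change the Hausdorff dimension, so $\dim(Gr_{(0,1]}(f))>1$ as well. Then the comparison result for parabolic Hausdorff dimensions with different parameters (Proposition~2.5 in \cite{Erraoui Hakiki}, valid for the pair $(H,1)$ because $\rho_1$ is precisely the max-norm metric on $\mathbb{R}^2$) gives $\dim_{\rho_H}(Gr_{(0,1]}(f))\ge\dim(Gr_{(0,1]}(f))>1$. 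Finally, in the metric space $(\mathbb{R}_+\times\mathbb{R},\rho_H)$ a Borel set whose Hausdorff dimension exceeds $1$ has $\mathcal{H}^{\beta}_{\rho_H}(\cdot)=\infty$ for some $\beta\in(1,\dim_{\rho_H})$, hence carries a probability measure of finite $\beta$-energy and a fortiori of finite $1$-energy; therefore $\mathcal{C}_{\rho_H,1}(Gr_{(0,1]}(f))>0$ (one should note $Gr_{(0,1]}(f)$ is $\sigma$-compact, so Frostman's lemma applies without set-theoretic scruples).

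Applying the lower bound of Proposition~\ref{prop hit point} with $d=1$ then yields, for every $x\in\mathbb{R}$,
\[
\mathbb{P}\{\exists\,t\in(0,1]:(B^H+f)(t)=x\}\;\ge\;C^{-1}\,\mathcal{C}_{\rho_H,1}(Gr_{(0,1]}(f))\;>\;0,
\]
where $C$ is the (possibly $x$-dependent) constant of Proposition~\ref{prop hit point}; together with the trivial case $x=f(0)$ this shows that $f$ restricted to $(0,1]\subset(0,\infty)$ is non-polar for $B^H$, which is the assertion. There is essentially no new probabilistic content in this argument, so I do not expect a real obstacle: the one point deserving care is checking that Proposition~2.5 of \cite{Erraoui Hakiki} is legitimately applicable with the degenerate second parameter $1$, so that the Euclidean dimension of the graph really is a lower bound for its $\rho_H$-dimension — and this is already asserted parenthetically in the discussion preceding Proposition~\ref{polarity Weierstrass function}.
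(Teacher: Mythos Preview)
Your proposal is correct and follows essentially the same approach as the paper: invoke Theorem~4 of \cite{Przytycki Urbanski} to obtain $\dim(Gr_{[0,1]}(f))>1$, then use the comparison result (Proposition~2.5 of \cite{Erraoui Hakiki}) to deduce $\dim_{\rho_H}(Gr(f))>1$ and hence positive $\rho_H$-capacity of order $1$, and conclude via the lower bound in Proposition~\ref{prop hit point}. Your additional care with the endpoint $t=0$ and the applicability of Frostman's lemma are legitimate refinements that the paper leaves implicit.
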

	 
	This raises the question whether the result remains valid in higher dimensions. However Proposition \ref{polarity General Weierstrass function} cannot be extended to $d$-dimensional case as is shown by the following 
	\begin{proposition}\label{contre exemple}
		Let $\{B^{H}(t) : t \in [0,1]\}$ be a $d$-dimensional fractional Brownian motion of Hurst index $H\in (0,1)$ such that $H\,d>1$. Then there exists a $\alpha$-Hölder  and reverse $\alpha$-Hölder continuous function $f :[0,1] \rightarrow \mathbb{R}^d$ such that the process $\{(B^{H}+f)(t) : t \in [0,1]\}$ does not hit $x$ for all $x\in \mathbb{R}^d$, i.e. $$\mathbb{P}\left\lbrace \exists t \in [0,1] : (B^H+f)(t)=x\right\rbrace=0.$$
	\end{proposition}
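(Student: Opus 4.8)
The plan is to exhibit an explicit $d$-dimensional function $f$ whose components are Weierstrass-type functions, but chosen so that the \emph{parabolic} Hausdorff dimension of its graph stays strictly below $d$, so that by Proposition~\ref{prop hit point} the process $B^H+f$ cannot hit points. The key observation is that for a single scalar Weierstrass function $\mathcal{W}_{\tau,\theta}$, while $\dim(Gr_{[0,1]}(\mathcal{W}_{\tau,\theta})) = 2 + \log\tau/\log\theta$ can be made arbitrarily close to $1$ (choosing $\tau$ close to $\theta^{-1}$), the Hölder exponent $\alpha = -\log\tau/\log\theta$ is then close to $1$. So first I would fix a common Hölder exponent $\alpha \in (0,1)$ and build $f = (f_1,\dots,f_d)$ where each $f_i$ is a Weierstrass function with the same parameters $(\tau,\theta)$ (or independently phase-shifted copies), so that $f$ is $\alpha$-Hölder and, since each component already is reverse $\alpha$-Hölder on $[0,1]$, $f$ is reverse $\alpha$-Hölder as a vector-valued map.

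Next I would estimate $\dim_{\rho_H}(Gr_{[0,1]}(f))$ from above. The natural route is a covering argument: over a dyadic-type interval $J$ of length $\delta$, the oscillation of each $f_i$ is at most $C\delta^\alpha$, so $f(J)$ is contained in a cube of side $C\delta^\alpha$ in $\mathbb{R}^d$, which can be covered by roughly $(\delta^\alpha/\delta^{\alpha})^{?}$... more carefully: to cover $Gr_J(f)$ by $\rho_H$-balls of radius $r$ one needs $r \sim \delta^H$ to match the time direction, and then the spatial part $f(J)$ of diameter $\sim\delta^\alpha$ needs $(\delta^\alpha/\delta^{?})$ balls; balancing $r=\delta^H$ against the spatial scale gives the count. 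Summing over the $\delta^{-1}$ intervals of length $\delta$, one gets that $Gr_{[0,1]}(f)$ is covered by on the order of $\delta^{-1}\cdot(\delta^\alpha/\delta^H)^{d}$ balls of radius $\delta^H$ when $\alpha<H$ (and just $\delta^{-1}$ balls when $\alpha \ge H$), leading to an upper box/Hausdorff estimate $\dim_{\rho_H}(Gr_{[0,1]}(f)) \le \frac{1}{H} + d\bigl(1 - \frac{\alpha}{H}\bigr)^+$ — and more precisely, by exploiting the lacunary structure of the Weierstrass series (the dyadic blocks genuinely realize oscillation $\sim\delta^\alpha$), one expects the sharp value to be of the form $\min\{1/\alpha,\ 1/H + d(1-\alpha/H)\}$, paralleling the formula for $\dim_{\rho_H}(Gr_E(B^{\alpha}))$ used in the Corollary above. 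The point is that this expression is a \emph{decreasing} function of $\alpha$ reaching $1$ as $\alpha\uparrow 1$; hence, since we assumed $Hd>1$, by taking $\alpha$ sufficiently close to $1$ we can force $\dim_{\rho_H}(Gr_{[0,1]}(f)) < d$.

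Once $\dim_{\rho_H}(Gr_{[0,1]}(f)) < d$ is established, the conclusion is immediate: a set of parabolic Hausdorff dimension strictly less than $d$ has $\mathcal{H}_{\rho_H}^d(Gr_{[0,1]}(f)) = 0$, so the upper bound in \eqref{upper-lower hitting points} of Proposition~\ref{prop hit point} gives $\mathbb{P}\{\exists t\in[0,1]:(B^H+f)(t)=x\}=0$ for every $x\in\mathbb{R}^d$. It remains to double-check that such an $f$ can simultaneously be genuinely reverse $\alpha$-Hölder: this is where I would invoke the known estimate (Hardy \cite{Hardy}, or Bishop--Peres \cite{Bishop Peres}) that $\mathcal{W}_{\tau,\theta}$ with $\alpha = -\log\tau/\log\theta$ is reverse $\alpha$-Hölder for $\theta$ large enough, and note that the vector version inherits this from any one component.

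The main obstacle I anticipate is the \emph{upper} bound on the parabolic Hausdorff dimension of the graph: one must show the covering estimate is of the right order, i.e. that the Hölder oscillation bound is not wastefully loose when combined with the parabolic scaling. For a general $\alpha$-Hölder $f$ the naive covering gives $\dim_{\rho_H}(Gr) \le 1/H + d(1-\alpha/H)^+$, which is already $<d$ once $\alpha$ is close enough to $1$ and $Hd>1$ — so in fact one does \emph{not} need the sharp Weierstrass value here, only the elementary Hölder covering bound, plus the separate fact (needed only to make the statement non-vacuous) that genuinely reverse $\alpha$-Hölder $\alpha$-Hölder functions exist for every $\alpha<1$. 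So the real care is in (i) writing the covering of $Gr_{[0,1]}(f)$ correctly in the $\rho_H$ metric with the two competing scales $\delta^H$ and $\delta^\alpha$, and (ii) verifying $1/H + d(1-\alpha/H) < d \iff (d-1/H)(1-\alpha) < 0$... rather $\iff 1 - \alpha < (d-1/H)/d \cdot$ — i.e. choosing $\alpha$ with $1-\alpha$ small enough, which is possible precisely because $d - 1/H > 0$, i.e. $Hd>1$. That inequality is the crux and uses the hypothesis in an essential way.
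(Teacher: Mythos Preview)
Your approach is correct and follows the same overall strategy as the paper: exhibit a vector-valued Weierstrass function $f$, show $\dim_{\rho_H}(Gr_{[0,1]}(f)) < d$, and conclude via the upper bound of Proposition~\ref{prop hit point}. The execution of the dimension bound, however, is different. The paper takes the diagonal $f = (\mathcal{W}_{\tau,\theta},\dots,\mathcal{W}_{\tau,\theta})$, uses the bi-Lipschitz equivalence $Gr_{[0,1]}(f) \simeq Gr_{[0,1]}(\mathcal{W}_{\tau,\theta})$ to reduce to a scalar graph, and then invokes the comparison inequality $\dim_{\rho_H} \le \dim - 1 + 1/H$ from \cite{Erraoui Hakiki} together with $\dim(Gr_{[0,1]}(\mathcal{W}_{\tau,\theta})) \le 2-\alpha$ to obtain $\dim_{\rho_H}(Gr_{[0,1]}(f)) \le 1 - \alpha + 1/H$. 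Your direct $\rho_H$-covering, based only on the $\alpha$-Hölder modulus, is more elementary and self-contained: it needs neither the diagonal reduction nor the external comparison lemma, and it works just as well whether the components are identical or phase-shifted. What the paper's route buys is brevity given the cited tools; what yours buys is independence from them.

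One arithmetic correction: the inequality $1/H + d(1 - \alpha/H) < d$ does \emph{not} reduce to $(d - 1/H)(1-\alpha) < 0$; it simplifies to $1/H < d\alpha/H$, i.e.\ $\alpha > 1/d$. Since the hypothesis $Hd > 1$ gives $1/d < H < 1$, any $\alpha \in (1/d, H)$ works, and this is exactly where the assumption $Hd>1$ enters.
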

	\begin{proof} Let $d\geq 2$ and define the function $f$ from $[0,1]$ to $\mathbb{R}^{d}$ by : $f(t)=\left(\mathcal{W}_{\tau,\theta}(t),\cdots,\mathcal{W}_{\tau,\theta}(t)\right)$ where $\mathcal{W}_{\tau,\theta}$ is the Weierstrass function given in \eqref{Weierstrass function}.
		It is easy to see that $f$ satisfies the $\alpha$-Hölder and reverse $\alpha$-Hölder
		conditions with $\alpha=-\dfrac{\log\left(\tau\right)}{\log\left(\theta\right)}$. Moreover we have 
 $\dim_{\rho_{H}}(Gr_{[0,1]}(f))=\dim_{\rho_{H}}(Gr_{[0,1]}(\mathcal{W}_{\tau,\theta}))$.
	It follows from Proposition 2.5 in  Erraoui and Hakiki \cite{Erraoui Hakiki} that $$\dim_{\rho_{H}}(Gr_{[0,1]}(f))\leq\dim(Gr_{[0,1]}(f))-1+\frac{1}{H}=1+\dfrac{\log\left(\tau\right)}{\log\left(\theta\right)}+\frac{1}{H}.$$
	Then for any $\theta >1$ and $\tau \in (\theta^{-1},\theta^{d-(1/H)-1}\wedge 1)$ we have $\dim_{\rho_{H}}(Gr_{[0,1]}(f))<d$.	The later entails that $\mathcal{H}_{\rho_{H}}^{d}(Gr_{[0,1]}(f))=0$. Proposition
		\eqref{prop hit point} will allow us to achieve the
		desired outcome. 
\end{proof}	

\bibliographystyle{abbrv}

\end{document}